\newcommand*\LyXThinSpace{\,\hspace{0pt}}
\numberwithin{equation}{section}
\numberwithin{figure}{section}
\theoremstyle{plain}
	\newtheorem{thm}{\protect\theoremname}
	\newtheorem{thm}{\protect\theoremname}[chapter]
\theoremstyle{plain}
\newtheorem{lem}[thm]{\protect\lemmaname}
\theoremstyle{plain}
\newtheorem{cor}[thm]{\protect\corollaryname}
\theoremstyle{definition}
\newtheorem{example}[thm]{\protect\examplename}
\providecommand{\corollaryname}{Corollary}
\providecommand{\examplename}{Example}
\providecommand{\lemmaname}{Lemma}
\providecommand{\theoremname}{Theorem}
\begin{document}
\title{On Almost Orthogonal Series }
\author{Ruiming Zhang}
\email{ruimingzhang@guet.edu.cn}
\address{School of Mathematics and Computing Sciences\\
Guilin University of Electronic Technology\\
Guilin, Guangxi 541004, P. R. China. }
\subjclass[2000]{33C20; 33C05; 33C10; 33C45.}
\keywords{Special functions; almost orthogonal expansions; infinite matrix operators. }
\thanks{This work is supported by the National Natural Science Foundation
of China, grant No. 11771355.}
\begin{abstract}
In this work we prove analogues of Bessel inequality and Riesz-Fisher
theorem in Hilbert spaces with respect to sequences. We apply our
generalized Bessel inequality to the Hilbert spaces associated with
the Normal, Beta, Gamma and certain discrete probability distributions
to show how to generate certain type of inequalities for special functions
systematically. 
\end{abstract}

\maketitle

\section{Introduction }

Let $\left\{ \phi_{n}(x)\right\} _{n\in\mathbb{N}}$ be an orthonormal
sequence of functions in the Hilbert space $L^{2}(a,b)$, then the
infinite matrix $\left(a_{m,n}\right)_{m,n\in\mathbb{N}}$ 
\begin{equation}
a_{n,m}=\int_{a}^{b}\phi_{m}(x)\overline{\phi_{n}(x)}dx=\delta_{m,n},\quad n\in\mathbb{N}\label{eq:1.1}
\end{equation}
defines the identity operator on the Hilbert space $\ell^{2}(\mathbb{N})$,
and for any $f\in L^{2}(a,b)$ the Bessel inequality for the orthonormal
system is given by
\begin{equation}
\sum_{n=1}^{\infty}\left|\int_{a}^{b}f(x)\overline{\phi_{n}(x)}dx\right|^{2}\le\int_{a}^{b}\left|f(x)\right|^{2}dx.\label{eq:1.2}
\end{equation}
In \cite{Bellman} Bellman proved that, under a compact perturbation
to the identity operator on $\ell^{2}(\mathbb{N})$,
\begin{equation}
\left(a_{m,n}\right)_{m,n\in\mathbb{N}}=\left(\delta_{m,n}\right)_{m,n\in\mathbb{N}}+\left(b_{m,n}\right)_{m,n\in\mathbb{N}},\quad\sum_{m,n=1}^{\infty}\left|b_{m,n}\right|^{2}<\infty,\label{eq:1.3}
\end{equation}
certain modified version of Riesz-Fisher theorem and Bessel inequality
are still valid in $L^{2}(a,b)$. 

In this work we show that different versions of modified Riesz-Fisher
theorem and Bessel inequality hold on any Hilbert space under the
condition
\begin{equation}
\sup_{m\in\mathbb{N}}\sum_{n=1}^{\infty}\left|a_{m,n}\right|=\sup_{n\in\mathbb{N}}\sum_{m=1}^{\infty}\left|a_{m,n}\right|<\infty.\label{eq:1.4}
\end{equation}
We apply our generalized Bessel inequality to the Hilbert spaces associated
with the Normal, Beta, Gamma and certain discrete probability distributions
to show how to generate certain type of inequalities for special functions
systematically. It is clear that these are not the only possible choices
for the Hilbert spaces and sequences. For example, one can choose
the Hilbert space to be any $L^{2}$ spaces associated with orthogonal
polynomials on the real line, and the sequence to be functions from
a generalized moment problem, \cite{Andrews,DLMF,Ismail}. 

On the Hilbert space 
\begin{equation}
\ell^{2}(\mathbb{N})=\left\{ \left\{ x_{n}\right\} _{n=1}^{\infty}\bigg|\sum_{n=1}^{\infty}\left|x_{n}\right|^{2}<\infty,\ x_{n}\in\mathbb{C}\right\} \label{eq:1.5}
\end{equation}
we denote

\begin{equation}
\left\langle \left\{ x_{n}\right\} _{n=1}^{\infty},\left\{ y_{n}\right\} _{n=1}^{\infty}\right\rangle _{\ell^{2}}=\sum_{n=1}^{\infty}x_{n}\overline{y_{n}},\label{eq:1.6}
\end{equation}
 and
\begin{equation}
\left\Vert \left\{ x_{n}\right\} _{n=1}^{\infty}\right\Vert _{\ell^{2}}=\sqrt{\left\langle \left\{ x_{n}\right\} _{n=1}^{\infty},\left\{ x_{n}\right\} _{n=1}^{\infty}\right\rangle _{\ell^{2}}}\label{eq:1.7}
\end{equation}
for its inner product and norm. Given any bounded linear operator
$A$ on $\ell^{2}(\mathbb{N})$, its operator norm is given by
\begin{equation}
\left\Vert A\right\Vert _{2}=\sup_{x\neq0,\,x\in\ell^{2}(\mathbb{N})}\frac{\left\Vert Ax\right\Vert _{\ell^{2}}}{\left\Vert x\right\Vert _{\ell^{2}}}=\sup_{\left\Vert x\right\Vert _{\ell^{2}}=1}\left\Vert Ax\right\Vert _{\ell^{2}}.\label{eq:1.8}
\end{equation}
 A complex infinite matrix $\left(a_{m,n}\right)_{m,n\in\mathbb{N}}$
is called symmetric if 
\begin{equation}
\overline{a_{m,n}}=a_{n,m},\quad m,n\in\mathbb{N}.\label{eq:1.9}
\end{equation}

\section{Main Results\label{sec:Main-Results}}
\begin{lem}
\label{lem:1}Let $A=\left(a_{m,n}\right)_{m,n=1}^{\infty}$ be any
symmetric matrix such that
\begin{equation}
C=\sup_{m\in\mathbb{N}}\sum_{n=1}^{\infty}\left|a_{m,n}\right|=\sup_{n\in\mathbb{N}}\sum_{m=1}^{\infty}\left|a_{m,n}\right|<\infty.\label{eq:2.1}
\end{equation}
Then $A$ defines a bounded self-adjoint operator on $\ell^{2}(\mathbb{N})$
satisfies 
\begin{equation}
\left\Vert A\right\Vert _{2}=\sup_{\left\Vert x\right\Vert _{\ell^{2}}=1}\left\Vert Ax\right\Vert _{\ell^{2}}\le C.\label{eq:2.2}
\end{equation}
Furthermore, if it satisfies any of the following three additional
conditions:

\begin{equation}
\sum_{m,n=1}^{\infty}\left|a_{n,m}\right|^{2}<\infty,\label{eq:2.3}
\end{equation}
\begin{equation}
\lim_{N\to\infty}\sup_{m\ge N+1}\left(\sum_{n=1}^{\infty}\left|a_{m,n}\right|\right)=0,\label{eq:2.4}
\end{equation}
 or
\begin{equation}
\lim_{N\to\infty}\sup_{n\ge1}\left(\sum_{m=N+1}^{\infty}\left|a_{n,m}\right|\right)=0,\label{eq:2.5}
\end{equation}
then $A$ is a compact self-adjoint operator on $\ell^{2}(\mathbb{N})$. 
\end{lem}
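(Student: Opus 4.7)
The plan is to establish boundedness and self-adjointness via a Schur-test argument, and then handle each of the three compactness conditions by exhibiting a sequence of finite-rank approximations that converge to $A$ in the operator norm $\|\cdot\|_{2}$.

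First I would prove the bound $\|A\|_{2}\le C$ by the standard Schur test. For $x=\{x_{n}\}\in\ell^{2}(\mathbb{N})$ and each $m$, Cauchy--Schwarz gives
\[
\biggl|\sum_{n=1}^{\infty}a_{m,n}x_{n}\biggr|^{2}\le\biggl(\sum_{n=1}^{\infty}|a_{m,n}|\biggr)\biggl(\sum_{n=1}^{\infty}|a_{m,n}|\,|x_{n}|^{2}\biggr)\le C\sum_{n=1}^{\infty}|a_{m,n}|\,|x_{n}|^{2}.
\]
Summing over $m$ and interchanging, then using (\ref{eq:2.1}) a second time on the column sum, yields $\|Ax\|_{\ell^{2}}^{2}\le C^{2}\|x\|_{\ell^{2}}^{2}$; this simultaneously certifies that $Ax\in\ell^{2}(\mathbb{N})$. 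Self-adjointness is immediate from $\overline{a_{m,n}}=a_{n,m}$, since the symmetry and the absolute convergence furnished by (\ref{eq:2.1}) allow the interchange of summations in $\langle Ax,y\rangle_{\ell^{2}}=\langle x,Ay\rangle_{\ell^{2}}$.

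For compactness, I would approximate $A$ in operator norm by finite-rank truncations. Under (\ref{eq:2.3}), let $A_{N}$ be the restriction of $A$ to the block $[1,N]\times[1,N]$; then $A_{N}$ has finite rank and $A-A_{N}$ is Hilbert--Schmidt with $\|A-A_{N}\|_{HS}^{2}=\sum_{(m,n)\notin[1,N]^{2}}|a_{m,n}|^{2}\to 0$, which dominates the operator norm. Under (\ref{eq:2.4}), let $B_{N}=P_{N}A$ where $P_{N}$ is the coordinate projection onto $\operatorname{span}\{e_{1},\dots,e_{N}\}$; then $B_{N}$ has rank at most $N$, and applying the Schur bound to $A-B_{N}$ gives
\[
\|A-B_{N}\|_{2}^{2}\le C\cdot\sup_{m\ge N+1}\sum_{n=1}^{\infty}|a_{m,n}|\to 0.
\]
For (\ref{eq:2.5}) I would first invoke symmetry $|a_{m,n}|=|a_{n,m}|$ to rewrite the hypothesis as $\sup_{m}\sum_{n\ge N+1}|a_{m,n}|\to 0$, and then truncate on the source side: set $D_{N}=AP_{N}$, which is also of rank at most $N$, and apply Schur to $A-D_{N}$ to obtain $\|A-D_{N}\|_{2}^{2}\le C\cdot\sup_{m}\sum_{n\ge N+1}|a_{m,n}|\to 0$. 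In each case $A$ is a norm limit of finite-rank operators, hence compact.

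The main obstacle, though modest, is choosing the direction of truncation correctly in each compactness case so that the approximant is genuinely finite-rank and the error estimate uses the available tail hypothesis as one factor and the reciprocal supremum from (\ref{eq:2.1}) as the other in the Schur bound. In particular, the symmetry assumption is indispensable for (\ref{eq:2.5}): a column-tail decay on its own cannot be converted into a norm estimate on a source-side truncation without first transporting it into a row-tail statement via $\overline{a_{m,n}}=a_{n,m}$.
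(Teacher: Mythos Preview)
Your proposal is correct and follows essentially the paper's route: a Schur-test bound for boundedness and self-adjointness, then finite-rank truncations with Schur-type tail estimates for compactness (the paper uses the single row truncation $P_{N}A$ together with the product bound $\|A-P_{N}A\|_{2}^{2}\le \sup_{m\ge N+1}\sum_{n}|a_{m,n}|\cdot\sup_{n}\sum_{m\ge N+1}|a_{n,m}|$ to cover both (\ref{eq:2.4}) and (\ref{eq:2.5}) at once, whereas you split into $P_{N}A$ and $AP_{N}$; since these are adjoints of one another the two arguments are equivalent). One minor inaccuracy in your commentary: symmetry is not actually needed to rewrite (\ref{eq:2.5}) as $\sup_{m}\sum_{n\ge N+1}|a_{m,n}|\to 0$, since that is merely a relabeling of the dummy indices $m\leftrightarrow n$---condition (\ref{eq:2.5}) is already a uniform \emph{row}-tail hypothesis, not a column-tail one.
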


\begin{proof}
For any $N\in\mathbb{N}$ the operator norms of $A^{(N)}=\left(a_{m,n}\right)_{m,n=1}^{N}$
on $\ell^{1}(\mathbb{C}^{N}),\ell^{2}(\mathbb{C}^{N}),\ell^{\infty}(\mathbb{C}^{N})$
satisfy \cite{HornJohnson} 
\[
\left\Vert A^{(N)}\right\Vert _{1}=\max_{1\le n\le N}\sum_{m=1}^{N}\left|a_{m,n}\right|\le C,\quad\left\Vert A^{(N)}\right\Vert _{\infty}=\max_{1\le m\le N}\sum_{n=1}^{N}\left|a_{m,n}\right|\le C
\]
and
\[
\left\Vert A^{(N)}\right\Vert _{2}\le\sqrt{\left\Vert A^{(N)}\right\Vert _{1}\cdot\left\Vert A^{(N)}\right\Vert _{\infty}}\le C.
\]
Then for any $N\in\mathbb{N}$ and $x=\left\{ x_{n}\right\} _{n=1}^{\infty},\,y=\left\{ y_{n}\right\} _{n=1}^{\infty}\in\ell^{2}(\mathbb{N})$,
\[
\sum_{m,n=1}^{N}\left|a_{m,n}x_{m}\overline{y_{n}}\right|\le C\left\Vert x\right\Vert _{\ell^{2}}\cdot\left\Vert y\right\Vert _{\ell^{2}}<\infty.
\]
Let $N\to\infty$ in the above inequality to obtain
\[
\sum_{m,n=1}^{\infty}\left|a_{m,n}x_{m}\overline{y_{n}}\right|=\sum_{n,m=1}^{\infty}\left|a_{m,n}x_{m}\overline{y_{n}}\right|\le C\left\Vert x\right\Vert _{\ell^{2}}\cdot\left\Vert y\right\Vert _{\ell^{2}}<\infty.
\]
Then by Fubini's theorem,
\[
<x,Ay>_{\ell^{2}}=<Ax,y>_{\ell^{2}}
\]
 and
\[
\left|<x,Ay>_{\ell^{2}}\right|=\left|<Ax,y>_{\ell^{2}}\right|\le C\left\Vert x\right\Vert _{\ell^{2}}\cdot\left\Vert y\right\Vert _{\ell^{2}}
\]
hold for any $x,y\in\ell^{2}(\mathbb{N})$. Thus $A$ is self-adjoint
with $\left\Vert A\right\Vert _{2}\le C$, \cite{AkhiezerGlazman}. 

For each $N\in\mathbb{N}$, let
\[
A^{(N)}=\left(a_{m,n}^{(N)}\right)_{m,n=1}^{\infty},\quad a_{m,n}^{(N)}=\begin{cases}
a_{m,n}, & 1\le m\le N,\\
0, & m\ge N+1.
\end{cases}
\]
 For $x\in\ell^{2}(\mathbb{N}_{0})$ the $m$-th component of $A^{(N)}x$
is 
\[
\left(A^{(N)}x\right)_{m}=\begin{cases}
\sum_{n=1}^{\infty}a_{m,n}x_{n}, & 1\le m\le N,\\
0, & m\ge N+1.
\end{cases}
\]
Then the range of $A^{(N)}$ is $N$ dimensional. Therefore $A^{(N)}$
is a compact operator on $\ell^{2}(\mathbb{N})$, \cite{AkhiezerGlazman}. 

If \eqref{eq:2.3} is satisfied, then 
\[
\lim_{N\to+\infty}\sum_{m=N+1}^{\infty}\sum_{n=1}^{\infty}\left|a_{n,m}\right|^{2}=0
\]
and
\[
\begin{aligned} & \left\Vert A-A^{(N)}\right\Vert _{2}^{2}=\sup_{\left\Vert x\right\Vert _{\ell^{2}}=1}\left\Vert \left(A-A^{(N)}\right)x\right\Vert _{\ell^{2}}^{2}=\sup_{\left\Vert x\right\Vert _{\ell^{2}}=1}\sum_{m=N+1}^{\infty}\left|\sum_{n=1}^{\infty}a_{m,n}x_{n}\right|^{2}\\
 & \le\sup_{\left\Vert x\right\Vert _{\ell^{2}}=1}\left(\sum_{m=N+1}^{\infty}\sum_{n=1}^{\infty}\left|a_{n,m}\right|^{2}\right)\cdot\left(\sum_{n=1}^{\infty}\left|x_{n}\right|^{2}\right)\le\left(\sum_{m=N+1}^{\infty}\sum_{n=1}^{\infty}\left|a_{n,m}\right|^{2}\right).
\end{aligned}
\]
Hence, $A=\lim_{N\to+\infty}A^{(N)}$ in operator norm on $\ell^{2}(\mathbb{N})$.
Since compact operators form a closed ideal in the norm limit, then
$A$ is compact, \cite{AkhiezerGlazman}. 

Similarly, observe that
\[
\begin{aligned} & \left\Vert A-A^{(N)}\right\Vert _{2}^{2}=\sup_{\left\Vert x\right\Vert _{\ell^{2}}=1}\left\Vert \left(A-A^{(N)}\right)x\right\Vert _{\ell^{2}}^{2}=\sup_{\left\Vert x\right\Vert _{\ell^{2}}=1}\sum_{m=N+1}^{\infty}\left|\sum_{n=1}^{\infty}a_{m,n}x_{n}\right|^{2}\\
 & \le\sup_{\left\Vert x\right\Vert _{\ell^{2}}=1}\sum_{m=N+1}^{\infty}\left(\sum_{n=1}^{\infty}\sqrt{\left|a_{m,n}\right|}\cdot\left(\sqrt{\left|a_{m,n}\right|}\left|x_{n}\right|\right)\right)^{2}\\
 & \le\sup_{\left\Vert x\right\Vert _{\ell^{2}}=1}\sum_{m=N+1}^{\infty}\left(\sum_{n=1}^{\infty}\left|a_{m,n}\right|\right)\cdot\left(\sum_{n=1}^{\infty}\left|a_{m,n}\right|\cdot\left|x_{n}\right|^{2}\right)\\
 & \le\sup_{m\ge N+1}\left(\sum_{n=1}^{\infty}\left|a_{m,n}\right|\right)\cdot\sup_{\left\Vert x\right\Vert _{\ell^{2}}=1}\left(\sum_{m=N+1}^{\infty}\sum_{n=1}^{\infty}\left|a_{m,n}\right|\cdot\left|x_{n}\right|^{2}\right)\\
 & \le\sup_{m\ge N+1}\left(\sum_{n=1}^{\infty}\left|a_{m,n}\right|\right)\cdot\sup_{\left\Vert x\right\Vert _{\ell^{2}}=1}\left(\sum_{n=1}^{\infty}\left(\sum_{m=N+1}^{\infty}\left|a_{n,m}\right|\right)\cdot\left|x_{n}\right|^{2}\right)\\
 & \le\sup_{m\ge N+1}\left(\sum_{n=1}^{\infty}\left|a_{m,n}\right|\right)\cdot\sup_{n\ge1}\left(\sum_{m=N+1}^{\infty}\left|a_{n,m}\right|\right)\cdot\sup_{\left\Vert x\right\Vert _{\ell^{2}}=1}\left(\sum_{n=1}^{\infty}\left|x_{n}\right|^{2}\right)\\
 & =\sup_{m\ge N+1}\left(\sum_{n=1}^{\infty}\left|a_{m,n}\right|\right)\cdot\sup_{n\ge1}\left(\sum_{m=N+1}^{\infty}\left|a_{n,m}\right|\right).
\end{aligned}
\]
If \eqref{eq:2.4} or \eqref{eq:2.5} is satisfied, then we have
\[
\left\Vert A-A^{(N)}\right\Vert _{2}^{2}\le C\sup_{m\ge N+1}\left(\sum_{n=1}^{\infty}\left|a_{m,n}\right|\right)
\]
or
\[
\left\Vert A-A^{(N)}\right\Vert _{2}^{2}\le C\sup_{n\ge1}\left(\sum_{m=N+1}^{\infty}\left|a_{n,m}\right|\right).
\]
Therefore, under \eqref{eq:2.4} or \eqref{eq:2.5}, $A$ is still
the norm limit of compact operators $A^{(N)}$. Therefore, $A$ must
be compact. 
\end{proof}
Given a Hilbert space $\mathcal{H}$ with inner product $(\cdot,\cdot)$
and norm $\left\Vert \cdot\right\Vert $, let $\left\{ \phi_{n}\right\} _{n=1}^{\infty}$
be a sequence in $\mathcal{H}$. We define $A=\left(a_{m,n}\right)_{m,n=1}^{\infty}$
by

\begin{equation}
a_{m,n}=(\phi_{m},\phi_{n})=\overline{(\phi_{n},\phi_{m})}=\overline{a_{n,m}},\quad m,n\in\mathbb{N}.\label{eq:2.6}
\end{equation}
If \eqref{eq:2.1} is satisfied, then by Lemma \ref{lem:1} $A$ is
a bounded self-adjoint operator. Furthermore, it is positive semidefinite,
since for any $x\in\ell^{2}(\mathbb{N})$,

\begin{equation}
\begin{aligned} & <x,Ax>_{\ell^{2}}=\lim_{N\to\infty}\sum_{m=1}^{N}x_{m}\overline{\sum_{n=1}^{N}a_{m,n}x_{n}}=\lim_{N\to\infty}\sum_{m=1}^{N}\sum_{n=1}^{N}(\phi_{n},\phi_{m})x_{m}\overline{x_{n}}\\
 & =\lim_{N\to\infty}\left(\sum_{n=1}^{N}\overline{x_{n}}\phi_{n},\sum_{m=1}^{N}\overline{x_{m}}\phi_{m}\right)=\lim_{N\to\infty}\left\Vert \sum_{n=1}^{N}\overline{x_{n}}\phi_{n}\right\Vert ^{2}\ge0.
\end{aligned}
\label{eq:2.7}
\end{equation}

The following is a generalization of the Bessel inequality.
\begin{thm}
\label{thm:2}Let $\left\{ \phi_{n}\right\} _{n=1}^{\infty}$ be a
sequence in Hilbert space $\mathcal{H}$ and $A=\left(a_{m,n}\right)_{m,n=1}^{\infty}$
defined as in \eqref{eq:2.6} such that \eqref{eq:2.1} holds. If
$f\in\mathcal{H}$ then 
\begin{equation}
\sum_{n=1}^{\infty}\left|(f,\phi_{n})\right|^{2}\le C\left\Vert f\right\Vert ^{2}.\label{eq:2.8}
\end{equation}
 
\end{thm}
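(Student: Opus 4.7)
The plan is to pivot on the finite-truncation vector $g_N := \sum_{n=1}^{N}(f,\phi_n)\,\phi_n \in \mathcal{H}$ and combine Cauchy--Schwarz in $\mathcal{H}$ with the operator bound already established in Lemma \ref{lem:1}. Writing $c_n = (f,\phi_n)$ and $S_N = \sum_{n=1}^{N}|c_n|^2$, I will show that $S_N \le C\|f\|^2$ uniformly in $N$ and then let $N\to\infty$. Introducing $g_N$ is natural because pairing it against $f$ collapses the inner product into $S_N$ itself, so Cauchy--Schwarz immediately converts the problem into an upper bound on $\|g_N\|$.

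Concretely, by conjugate-linearity of the inner product in the second slot,
\[
(f,g_N) \;=\; \sum_{n=1}^{N}\overline{c_n}\,(f,\phi_n) \;=\; \sum_{n=1}^{N}|c_n|^2 \;=\; S_N,
\]
so $S_N = |(f,g_N)| \le \|f\|\,\|g_N\|$. It remains to control $\|g_N\|$ in terms of $\sqrt{S_N}$, which is precisely what the identity in \eqref{eq:2.7} is designed to deliver. Taking the finitely supported vector $y^{(N)} := (\overline{c_1},\overline{c_2},\dots,\overline{c_N},0,0,\dots)\in\ell^2(\mathbb{N})$, the computation in \eqref{eq:2.7} gives
\[
\|g_N\|^2 \;=\; \Bigl\|\sum_{n=1}^{N}\overline{y^{(N)}_n}\,\phi_n\Bigr\|^2 \;=\; \bigl\langle y^{(N)},\,A y^{(N)}\bigr\rangle_{\ell^2}.
\]
Applying Lemma \ref{lem:1} and Cauchy--Schwarz in $\ell^2(\mathbb{N})$, this is bounded by $\|A\|_2\,\|y^{(N)}\|_{\ell^2}^2 \le C\,S_N$.

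Plugging back in yields $S_N^2 \le C\,\|f\|^2\,S_N$, hence $S_N \le C\,\|f\|^2$ (trivially if $S_N=0$), and letting $N\to\infty$ produces \eqref{eq:2.8}. I do not expect a substantive obstacle: the only point that requires a little care is that we cannot assume $(c_n)_{n\ge1}\in\ell^2$ at the start — that conclusion is exactly what the theorem asserts — so the estimate must be obtained uniformly in $N$ from finite sums before taking the limit. All the analytic content, namely the symmetry of $A$ and the operator-norm bound $\|A\|_2\le C$, is already packaged inside Lemma \ref{lem:1} and identity \eqref{eq:2.7}.
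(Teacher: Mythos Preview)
Your proof is correct and follows essentially the same route as the paper: introduce the partial sum $g_N=\sum_{n=1}^{N}(f,\phi_n)\phi_n$, use Cauchy--Schwarz to bound $S_N=(f,g_N)\le\|f\|\,\|g_N\|$, then bound $\|g_N\|^2$ by $C\,S_N$ via the quadratic form of $A$, and cancel. The only cosmetic difference is that the paper writes out the finite-matrix inequality $\sum_{m,n=1}^{N}a_{m,n}(f,\phi_m)\overline{(f,\phi_n)}\le C\,S_N$ directly rather than routing it through Lemma~\ref{lem:1} and \eqref{eq:2.7}.
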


\begin{proof}
For any $N\in\mathbb{N}$,
\[
\begin{aligned} & \sum_{n=1}^{N}\left|(f,\phi_{n})\right|^{2}=\left(f,\sum_{n=1}^{N}(f,\phi_{n})\phi_{n}\right)\le\left\Vert f\right\Vert \cdot\left\Vert \sum_{n=1}^{N}(f,\phi_{n})\phi_{n}\right\Vert \\
 & =\left\Vert f\right\Vert \cdot\sqrt{\left\Vert \sum_{n=1}^{N}(f,\phi_{n})\phi_{n}\right\Vert ^{2}}=\left\Vert f\right\Vert \cdot\sqrt{\left(\sum_{m=1}^{N}(f,\phi_{m})\phi_{m},\sum_{n=1}^{N}(f,\phi_{n})\phi_{n}\right)}\\
 & =\left\Vert f\right\Vert \cdot\sqrt{\sum_{m,n=1}^{N}a_{m,n}(f,\phi_{m})\overline{(f,\phi_{n})}}.
\end{aligned}
\]
 Notice that
\[
0\le\sum_{m,n=1}^{N}a_{m,n}(f,\phi_{m})\overline{(f,\phi_{n})}\le C\sum_{n=1}^{N}\left|(f,\phi_{n})\right|^{2},
\]
hence, 
\[
\sqrt{\sum_{n=1}^{N}\left|(f,\phi_{n})\right|^{2}}\le\sqrt{C}\cdot\left\Vert f\right\Vert ,
\]
which gives
\[
\sum_{n=1}^{N}\left|(f,\phi_{n})\right|^{2}\le C\left\Vert f\right\Vert ^{2}
\]
for any $N\in\mathbb{N}$. \eqref{eq:2.8} is obtained by letting
$N\to\infty$.
\end{proof}
The following is an analogue of the Riesz-Fisher theorem.
\begin{thm}
\label{thm:3}Under the assumptions of Theorem \ref{thm:2}. For any
$\left\{ x_{n}\right\} _{n=1}^{\infty}\in\ell^{2}(\mathbb{N})$, there
exists a $f\in\mathcal{H}$ such that
\begin{equation}
\sum_{n=1}^{\infty}\left|x_{n}-(f,\phi_{n})\right|^{2}\le C^{2}\sum_{n=1}^{\infty}\left|x_{n}\right|^{2}.\label{eq:2.9}
\end{equation}
 Consequently, 
\begin{equation}
\lim_{n\to+\infty}\left(x_{n}-(f,\phi_{n})\right)=0.\label{eq:2.10}
\end{equation}
\end{thm}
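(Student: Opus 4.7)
My plan is to construct $f$ explicitly as $f=\sum_{n=1}^{\infty}x_{n}\phi_{n}$, verify that this series converges in $\mathcal{H}$, and then bound the error by expanding it and applying Theorem~\ref{thm:2}. For convergence of the series, the partial-sum differences satisfy
\[
\Bigl\|\sum_{n=M+1}^{N}x_{n}\phi_{n}\Bigr\|^{2}=\sum_{n,m=M+1}^{N}x_{n}\overline{x_{m}}\,a_{n,m},
\]
and the splitting trick $|a_{n,m}|=\sqrt{|a_{n,m}|}\cdot\sqrt{|a_{n,m}|}$ used in the proof of Lemma~\ref{lem:1} bounds this by $C\sum_{n=M+1}^{N}|x_{n}|^{2}$, which tends to $0$. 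Hence $f$ is well-defined in $\mathcal{H}$, with $\|f\|^{2}\le C\sum_{n}|x_{n}|^{2}$.

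Next, Fubini (justified by \eqref{eq:2.1}) gives $(f,\phi_{k})=\sum_{n}x_{n}a_{n,k}$, and for any $N$,
\[
\sum_{k=1}^{N}\bigl|x_{k}-(f,\phi_{k})\bigr|^{2}=\sum_{k=1}^{N}|x_{k}|^{2}-2\,\mathrm{Re}\!\sum_{k=1}^{N}x_{k}\overline{(f,\phi_{k})}+\sum_{k=1}^{N}|(f,\phi_{k})|^{2}.
\]
Letting $N\to\infty$, the cross term passes to $\bigl(\sum_{k}x_{k}\phi_{k},f\bigr)=\|f\|^{2}$ and the last term is controlled by Theorem~\ref{thm:2} applied to $f$, namely $\sum_{k}|(f,\phi_{k})|^{2}\le C\|f\|^{2}$. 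Together,
\[
\sum_{n=1}^{\infty}|x_{n}-(f,\phi_{n})|^{2}\le\sum_{n}|x_{n}|^{2}+(C-2)\|f\|^{2}.
\]
Using $0\le\|f\|^{2}\le C\sum_{n}|x_{n}|^{2}$, the right-hand side is dominated by $\max\bigl(1,(C-1)^{2}\bigr)\sum_{n}|x_{n}|^{2}\le C^{2}\sum_{n}|x_{n}|^{2}$ in the natural regime $C\ge 1$. This establishes \eqref{eq:2.9}, and \eqref{eq:2.10} is then immediate because the terms of any convergent series of nonnegative reals tend to $0$.

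The main obstacle I anticipate is pinning down the precise constant $C^{2}$. Using the triangle inequality alone, the choice $f=\sum x_{n}\phi_{n}$ gives only $(1+C)^{2}$; the sharpening to $(C-1)^{2}$ relies on the cancellation $(C-2)\|f\|^{2}$ in the expansion above, which in turn requires applying Theorem~\ref{thm:2} to $f$ itself and identifying the cross term with $\|f\|^{2}$. A secondary point of care is the repeated interchange of summation used in computing $(f,\phi_{k})$ and the cross term; both are justified by the absolute summability guaranteed by \eqref{eq:2.1}.
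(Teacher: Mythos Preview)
Your construction of $f=\sum_{n}x_{n}\phi_{n}$ and the Cauchy estimate showing its convergence in $\mathcal{H}$ coincide with the paper's argument. The divergence is in how the final bound is obtained. The paper derives the \emph{pointwise} estimate
\[
\bigl|x_{n}-(f,\phi_{n})\bigr|\le\sum_{m=1}^{\infty}|x_{m}|\,|a_{m,n}|
\]
by writing $(f,\phi_{n})=\lim_{N}(s_{N},\phi_{n})$ and cancelling the $m=n$ term via $a_{n,n}=1$ (an implicit normalisation $\|\phi_{n}\|=1$, also used when the factor $\|\phi_{n}\|$ is dropped). The right-hand side is then the $n$-th entry of $A_{1}z$ with $A_{1}=(|a_{m,n}|)$ and $z=(|x_{m}|)$, so Lemma~\ref{lem:1} yields $\sum_{n}|x_{n}-(f,\phi_{n})|^{2}\le\|A_{1}\|_{2}^{2}\,\|z\|_{\ell^{2}}^{2}\le C^{2}\|x\|_{\ell^{2}}^{2}$. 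Your route---expanding the square, identifying the cross term with $\|f\|^{2}$, and applying Theorem~\ref{thm:2} to $f$---bypasses the pointwise step and in fact delivers the sharper constant $\max\{1,(C-1)^{2}\}$. Your caveat that $C\ge1$ is needed to reach the stated constant $C^{2}$ is harmless here: the same implicit normalisation the paper relies on forces $C\ge a_{n,n}=1$. So your argument is correct in the intended setting and slightly stronger; what the paper's approach buys is that, once the pointwise bound is in hand, the constant $C^{2}$ drops out of the operator norm without any case distinction on $C$.
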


\begin{proof}
For any $\left\{ x_{n}\right\} _{n=1}^{\infty}\in\ell^{2}(\mathbb{N})$,
let
\[
s_{n}=\sum_{k=1}^{n}x_{k}\phi_{k}\in\mathcal{H},
\]
 then for $m>n\ge1$,
\[
\begin{aligned} & \left\Vert s_{m}-s_{n}\right\Vert ^{2}=\left(\sum_{j=n+1}^{m}x_{j}\phi_{j}(x),\sum_{k=n+1}^{m}x_{k}\phi_{k}(x)\right)=\sum_{j,k=n+1}^{m}a_{j,k}x_{j}\overline{x_{k}}\\
 & \le\left(\sup_{m\ge j\ge n+1}\sum_{k=n+1}^{m}\left|a_{j,k}\right|\right)\cdot\sum_{j=n+1}^{m}\left|x_{j}\right|^{2}\le C\sum_{j=n+1}^{m}\left|x_{j}\right|^{2}
\end{aligned}
\]
Hence $\left\{ s_{n}\right\} _{n=1}^{\infty}$ is a Cauchy sequence
in the Hilbert space $\mathcal{H}$. Therefore, there exists $f\in\mathcal{H}$
such that
\[
\lim_{n\to\infty}\left\Vert s_{n}-f\right\Vert =0.
\]
 For any $n<N$, 
\[
\begin{aligned} & \left|x_{n}-(f,\phi_{n})\right|=\left|x_{n}-(s_{N},\phi_{n})-(f-s_{N},\phi_{n})\right|=\left|x_{n}-\sum_{m=1}^{N}x_{m}(\phi_{m},\phi_{n})-(f-s_{N},\phi_{n})\right|\\
 & \le\left|x_{n}-\sum_{m=1}^{N}x_{m}a_{m,n}\right|+\left|(f-s_{N},\phi_{n})\right|\le\sum_{m=1,m\neq n}^{N}\left|x_{m}\right|\cdot\left|a_{m,n}\right|+\left\Vert f-s_{N}\right\Vert \cdot\left\Vert \phi_{n}\right\Vert \\
 & \le\sum_{m=1}^{N}\left|x_{m}\right|\cdot\left|a_{m,n}\right|+\left\Vert f-s_{N}\right\Vert .
\end{aligned}
\]
Hence,
\[
\left|x_{n}-(f,\phi_{n})\right|\le\limsup_{N\to\infty}\left(\sum_{m=1}^{N}\left|x_{m}\right|\cdot\left|a_{m,n}\right|+\left\Vert f-s_{N}\right\Vert \right)=\sum_{m=1}^{\infty}\left|x_{m}\right|\cdot\left|a_{m,n}\right|
\]
Therefore,
\[
\begin{aligned} & \sum_{n=1}^{\infty}\left|x_{n}-(f,\phi_{n})\right|^{2}\le\sum_{n=1}^{\infty}\left(\sum_{m=1}^{\infty}\left|x_{m}\right|\cdot\left|a_{m,n}\right|\right)^{2}=\sum_{n=1}^{\infty}\left(\sum_{m=1}^{\infty}\left|x_{m}\right|\cdot\left|a_{n,m}\right|\right)^{2}\\
 & =\left\langle A_{1}z,A_{1}z\right\rangle _{\ell^{2}}\le\left\Vert A_{1}\right\Vert _{2}^{2}\cdot\left\Vert z\right\Vert _{\ell^{2}}^{2}\le C^{2}\left\Vert x\right\Vert _{\ell^{2}}^{2},
\end{aligned}
\]
where $A_{1}=\left(\left|a_{m,n}\right|\right)_{m,n=1}^{\infty}$
and $z=\left\{ \left|x_{m}\right|\right\} _{m=1}^{\infty}$. 
\end{proof}
Let $\psi(x)$ be an nondecreasing function on $\mathbb{R}$ such
that \cite{Andrews,DLMF,Ismail}
\begin{equation}
\int_{-\infty}^{\infty}d\psi(x)=1.\label{eq:2.11}
\end{equation}
For $\mathcal{H}=L^{2}(\mathbb{R},d\psi(x))$ and a sequence of almost
periodic functions we have the following corollary of Theorems \ref{thm:2}
and \ref{thm:3}. 
\begin{cor}
\label{cor:4} Given a sequence of real numbers $\left\{ \lambda_{n}\right\} _{n\in\mathbb{N}}$
let
\begin{equation}
\phi_{n}(x)=e^{i\lambda_{n}x},\quad n\in\mathbb{N}\label{eq:2.12}
\end{equation}
and 
\begin{equation}
a_{m,n}=\int_{-\infty}^{\infty}\phi_{m}(x)\overline{\phi_{n}(x)}d\psi(x)=\int_{-\infty}^{\infty}e^{i(\lambda_{m}-\lambda_{n})x}d\psi(x).\label{eq:2.13}
\end{equation}

If 
\begin{equation}
C_{4}=\sup_{m\in\mathbb{N}}\sum_{n=1}^{\infty}\left|\int_{-\infty}^{\infty}e^{i(\lambda_{m}-\lambda_{n})x}d\psi(x)\right|=\sup_{n\in\mathbb{N}}\sum_{m=1}^{\infty}\left|\int_{-\infty}^{\infty}e^{i(\lambda_{m}-\lambda_{n})x}d\psi(x)\right|<\infty,\label{eq:2.14}
\end{equation}
 then for any $f\in L^{2}(\mathbb{R},d\psi(x))$ we have the generalized
Bessel inequality,
\begin{equation}
\sum_{n=1}^{\infty}\left|\int_{\mathbb{R}}f(x)e^{-i\lambda_{n}x}d\psi(x)\right|^{2}\le C_{4}\int_{\mathbb{R}}\left|f(x)\right|^{2}d\psi(x).\label{eq:2.15}
\end{equation}
Furthermore, for any $\left\{ x_{n}\right\} _{n=1}^{\infty}\in\ell^{2}(\mathbb{N})$,
there exists a function $F\in L^{2}\left(\mathbb{R},d\psi(x)\right)$
such that
\begin{equation}
\sum_{n=1}^{\infty}\left|x_{n}-\int_{\mathbb{R}}F(x)e^{-i\lambda_{n}x}d\psi(x)\right|^{2}<\infty.\label{eq:2.16}
\end{equation}
\end{cor}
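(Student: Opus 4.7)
The plan is to verify that Corollary \ref{cor:4} is a direct application of Theorems \ref{thm:2} and \ref{thm:3} with $\mathcal{H}=L^{2}(\mathbb{R},d\psi(x))$, inner product $(f,g)=\int_{\mathbb{R}}f(x)\overline{g(x)}d\psi(x)$, and the specific sequence $\phi_{n}(x)=e^{i\lambda_{n}x}$. So the entire proof reduces to a short setup check followed by quoting the two theorems.

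First I would note that because $|\phi_{n}(x)|=1$ for every real $\lambda_{n}$ and $\int_{\mathbb{R}}d\psi(x)=1$ by \eqref{eq:2.11}, each $\phi_{n}\in L^{2}(\mathbb{R},d\psi(x))$ with $\|\phi_{n}\|^{2}=1$. Then the matrix entries associated with this sequence via \eqref{eq:2.6} are precisely
\[
a_{m,n}=(\phi_{m},\phi_{n})=\int_{-\infty}^{\infty}e^{i(\lambda_{m}-\lambda_{n})x}d\psi(x),
\]
matching \eqref{eq:2.13}. The Hermitian symmetry $\overline{a_{m,n}}=a_{n,m}$ follows immediately from $\overline{e^{i(\lambda_{m}-\lambda_{n})x}}=e^{i(\lambda_{n}-\lambda_{m})x}$ (since $\lambda_{n}$ and $x$ are real and $\psi$ is real-valued), so the hypothesis \eqref{eq:2.1} in Lemma \ref{lem:1} translates verbatim to \eqref{eq:2.14}, with $C=C_{4}$.

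Next I would apply Theorem \ref{thm:2}: for any $f\in L^{2}(\mathbb{R},d\psi(x))$,
\[
(f,\phi_{n})=\int_{\mathbb{R}}f(x)e^{-i\lambda_{n}x}d\psi(x),
\]
and substituting into \eqref{eq:2.8} immediately yields \eqref{eq:2.15}. For the Riesz--Fisher analogue, I would apply Theorem \ref{thm:3} to an arbitrary $\{x_{n}\}_{n=1}^{\infty}\in\ell^{2}(\mathbb{N})$ to obtain $F\in L^{2}(\mathbb{R},d\psi(x))$ satisfying
\[
\sum_{n=1}^{\infty}\left|x_{n}-\int_{\mathbb{R}}F(x)e^{-i\lambda_{n}x}d\psi(x)\right|^{2}\le C_{4}^{2}\sum_{n=1}^{\infty}|x_{n}|^{2}<\infty,
\]
which is \eqref{eq:2.16}.

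There is no genuine obstacle in this proof; the only point requiring a line of verification is the Hermitian symmetry of the matrix $(a_{m,n})$, since Theorems \ref{thm:2} and \ref{thm:3} were stated through the symmetric-matrix framework of Lemma \ref{lem:1}. Everything else is a pure translation of notation from the abstract Hilbert space $\mathcal{H}$ to the concrete space $L^{2}(\mathbb{R},d\psi(x))$ with the exponential family $\{e^{i\lambda_{n}x}\}$.
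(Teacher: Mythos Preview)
Your proposal is correct and matches the paper's approach exactly: the paper does not give a separate proof of Corollary~\ref{cor:4} but simply states it as an immediate consequence of Theorems~\ref{thm:2} and~\ref{thm:3} applied to $\mathcal{H}=L^{2}(\mathbb{R},d\psi(x))$ with $\phi_{n}(x)=e^{i\lambda_{n}x}$. Your added verification that $\phi_{n}\in\mathcal{H}$ (via $|\phi_{n}|=1$ and \eqref{eq:2.11}) and that $(a_{m,n})$ is Hermitian is a welcome bit of care that the paper leaves implicit.
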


The following is another direct consequences Theorems \ref{thm:2}
and \ref{thm:3} when the support of $d\psi$ is a subset of $\mathbb{N}$:
\begin{cor}
\label{cor:5} Let $\left\{ \psi(k)\right\} _{k=1}^{\infty}$ be a
sequence of nonnegative numbers such that $\sum_{k\in\mathbb{N}}\psi(k)=1$.
Then for any bounded function $f(x)$ the Stieltjes integral is given
by
\begin{equation}
\int_{\mathbb{R}}f(x)d\psi(x)=\sum_{k=1}^{\infty}\psi(k)f(k).\label{eq:2.17}
\end{equation}
Let $\left\{ \phi_{n}(x)\right\} _{n}$ be a sequence of functions
in $L^{2}\left(\mathbb{R},d\psi(x)\right)$, we define,
\begin{equation}
a_{m,n}=\sum_{k=1}^{\infty}\psi(k)\phi_{m}(k)\overline{\phi_{n}(k)},\quad m,n\in\mathbb{N}.\label{eq:2.18}
\end{equation}
If it satisfies
\begin{equation}
C_{5}=\sup_{m\in\mathbb{N}}\sum_{n=1}^{\infty}\left|a_{m,n}\right|=\sup_{n\in\mathbb{N}}\sum_{m=1}^{\infty}\left|a_{m,n}\right|<\infty,\label{eq:2.19}
\end{equation}
then for any $f\in L^{2}(\mathbb{R},d\psi(x))$, i.e.
\begin{equation}
\sum_{k=1}^{\infty}\psi(k)\left|f(k)\right|^{2}<\infty,\label{eq:2.20}
\end{equation}
we have
\begin{equation}
\sum_{n=1}^{\infty}\left|f_{n}\right|^{2}\le\left(\sup_{m\in\mathbb{N}}\sum_{n=1}^{\infty}\left|a_{m,n}\right|\right)\sum_{k=1}^{\infty}\psi(k)\left|f(k)\right|^{2},\label{eq:2.21}
\end{equation}
 where
\begin{equation}
f_{n}=\sum_{k=1}^{\infty}\psi(k)f(k)\overline{\phi_{n}(k)}.\label{eq:2.22}
\end{equation}
\end{cor}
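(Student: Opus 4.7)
The plan is to recognize Corollary \ref{cor:5} as the specialization of Theorem \ref{thm:2} to the Hilbert space $\mathcal{H}=L^{2}(\mathbb{R},d\psi)$ in the case where $d\psi$ is a purely atomic probability measure supported on $\mathbb{N}$. Because the measure is atomic with countable support, every integral against $d\psi$ collapses to the series representation \eqref{eq:2.17}, so the inner product on $\mathcal{H}$ is
\[
(f,g)=\int_{\mathbb{R}}f(x)\overline{g(x)}\,d\psi(x)=\sum_{k=1}^{\infty}\psi(k)f(k)\overline{g(k)},
\]
and the corresponding norm is $\|f\|^{2}=\sum_{k=1}^{\infty}\psi(k)|f(k)|^{2}$.

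With this identification, the quantity $a_{m,n}$ defined in \eqref{eq:2.18} is precisely $(\phi_{m},\phi_{n})$, matching the definition \eqref{eq:2.6} used in the hypothesis of Theorem \ref{thm:2}. Hypothesis \eqref{eq:2.19} of the corollary is literally condition \eqref{eq:2.1} of Lemma \ref{lem:1} with constant $C=C_{5}$, so the symmetric matrix $A=(a_{m,n})$ defines a bounded self-adjoint operator on $\ell^{2}(\mathbb{N})$ and all the assumptions of Theorem \ref{thm:2} are in force.

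Next I would write the Fourier-type coefficient
\[
f_{n}=\sum_{k=1}^{\infty}\psi(k)f(k)\overline{\phi_{n}(k)}=(f,\phi_{n}),
\]
so that \eqref{eq:2.8} applied to this $f\in L^{2}(\mathbb{R},d\psi)$ reads
\[
\sum_{n=1}^{\infty}|f_{n}|^{2}=\sum_{n=1}^{\infty}|(f,\phi_{n})|^{2}\le C_{5}\,\|f\|^{2}=C_{5}\sum_{k=1}^{\infty}\psi(k)|f(k)|^{2},
\]
which is exactly \eqref{eq:2.21}.

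There is no substantive obstacle: the only sliver of care is making sure that the Stieltjes-integral formula \eqref{eq:2.17}, stated in the corollary for bounded $f$, also applies to the ingredients entering the proof. Since every term $\phi_{m}\overline{\phi_{n}}$, $f\overline{\phi_{n}}$, and $|f|^{2}$ is integrated against an absolutely convergent countable-atomic measure, the reduction of the Stieltjes integral to the series is valid for any $d\psi$-integrable function, in particular for all $L^{1}(d\psi)$ and $L^{2}(d\psi)$ functions involved here, by dominated convergence on finite partial sums. Once this trivial verification is noted, the corollary follows immediately by invoking Theorem \ref{thm:2}.
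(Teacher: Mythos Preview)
Your proposal is correct and matches the paper's approach exactly: the paper presents Corollary~\ref{cor:5} as an immediate specialization of Theorem~\ref{thm:2} to the case where $d\psi$ is supported on $\mathbb{N}$, with no further argument given. Your identification of $a_{m,n}=(\phi_m,\phi_n)$, $f_n=(f,\phi_n)$, and $\|f\|^2=\sum_k\psi(k)|f(k)|^2$ is precisely the intended reduction.
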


\section{\label{sec:Examples}Applications}

In this section we apply Corollary \ref{cor:4} to derive inequalities
for the special functions associated with the Gaussian, Gamma and
Beta distributions. We also provide discrete examples related to Dirichlet
series. Our purpose here is to show that generalized Bessel inequalities
can be used to generate certain type special function inequalities. 

\subsection{Gaussian distribution}

Since \cite{Andrews,DLMF,Ismail}
\begin{equation}
\int_{-\infty}^{\infty}\frac{e^{-x^{2}/2+i\lambda x}dx}{\sqrt{2\pi}}=e^{-\lambda^{2}/2},\label{eq:3.1}
\end{equation}
then for any sequence of positive numbers $\left\{ \lambda_{n}\right\} _{n\in\mathbb{N}}$
let $\phi_{n}(x)=e^{i\lambda_{n}x},\ x\in\mathbb{R},\ n\in\mathbb{N}$
and 
\begin{equation}
a_{m,n}=\int_{-\infty}^{\infty}\frac{e^{-x^{2}/2+i(\lambda_{m}-\lambda_{n})x}dx}{\sqrt{2\pi}}=e^{-(\lambda_{m}-\lambda_{n})^{2}/2}.\label{eq:3.2}
\end{equation}
Assuming there exists a positive number $\alpha>\sqrt{2}$ such that
for positive integers $m>n\ge1$
\begin{equation}
\lambda_{m}-\lambda_{n}\ge\alpha\log^{1/2}\left(1+\left|m-n\right|\right),\label{eq:3.3}
\end{equation}
 then for any $m\in\mathbb{N}$,
\[
\begin{aligned} & \sum_{n=1}^{\infty}\left|a_{m,n}\right|\le\sum_{n=1}^{\infty}e^{-\alpha^{2}/2\log\left(1+\left|m-n\right|\right)}=\sum_{n=1}^{\infty}\frac{1}{\left(1+\left|m-n\right|\right)^{\alpha^{2}/2}}\\
 & \le\sum_{n=-\infty}^{\infty}\frac{1}{\left(1+\left|n\right|\right)^{\alpha^{2}/2}}<\infty,
\end{aligned}
\]
then
\begin{equation}
C_{1}=\sup_{m\in\mathbb{N}}\sum_{n=1}^{\infty}e^{-(\lambda_{m}-\lambda_{n})^{2}/2}\le\sum_{n=-\infty}^{\infty}\frac{1}{\left(1+\left|n\right|\right)^{\alpha^{2}/2}}<\infty.\label{eq:3.4}
\end{equation}
In particular, if $\lambda_{n}=n$ and $\phi_{n}(x)=e^{inx}$ then
\begin{equation}
a_{m,n}=e^{-(m-n)^{2}/2},\quad b_{m,n}=a_{m,n}-\delta_{m,n}\in\mathbb{R},\label{eq:3.5}
\end{equation}
then 
\begin{equation}
\sum_{m,n=1}^{\infty}b_{m,n}^{2}\ge\sum_{n=1}^{\infty}b_{n+1,n}^{2}=\sum_{n=1}^{\infty}e^{-1}=\infty,\label{eq:3.6}
\end{equation}
while
\begin{equation}
\begin{aligned} & C_{1}=\sup_{m\in\mathbb{N}}\sum_{n=1}^{\infty}\left|a_{m,n}\right|=\sup_{m\in\mathbb{N}}\sum_{n=1}^{\infty}e^{-(m-n)^{2}/2}\\
 & \le\sup_{m\in\mathbb{N}}\sum_{n=-\infty}^{\infty}e^{-(m-n)^{2}/2}=\sum_{n=-\infty}^{\infty}e^{-n^{2}/2}<\infty.
\end{aligned}
\label{eq:3.7}
\end{equation}
 The above shows that \eqref{eq:1.4} type condition is satisfied
while \eqref{eq:1.3} is not met.

Recall that for $a,n,a_{1},\dots,a_{m}\in\mathbb{C}$ the $q$-shifted
factorials are defined by \cite{Andrews,Ismail,IsmailZhang1}
\begin{equation}
(a;q)_{\infty}=\prod_{k=0}^{\infty}(1-aq^{k}),\,(a;q)_{n}=\frac{(a;q)_{\infty}}{(aq^{n};q)_{\infty}}\label{eq:3.8}
\end{equation}
and 
\begin{equation}
(a_{1},a_{2},\cdots,a_{m};q)_{n}=\prod_{j=1}^{m}(a_{j};q)_{n}.\label{eq:3.9}
\end{equation}
For $0<q<1$ and $\beta>0$, let $\lambda=\mu\sqrt{\log q^{-2\beta}}$
in \eqref{eq:3.1} to get
\begin{equation}
\int_{-\infty}^{\infty}\psi(x\vert\beta)e^{i\mu x}dx=q^{\beta\mu^{2}},\label{eq:3.10}
\end{equation}
where 
\begin{equation}
\psi(x\vert\beta)=\frac{\exp\left(\frac{x^{2}}{\log q^{4\beta}}\right)}{\sqrt{2\pi\log q^{-2\beta}}}.\label{eq:3.11}
\end{equation}
 Then for any sequence of positive numbers $\left\{ \mu_{n}\right\} _{n\in\mathbb{N}}$
and the sequence of functions $\left\{ e^{i\mu_{n}x}\right\} _{n\in\mathbb{N}}$
such that $\alpha>\sqrt{2}$ and for $m>n\ge1$,
\begin{equation}
\mu_{m}-\mu_{n}\ge\frac{\alpha}{\sqrt{\log q^{-2\beta}}}\log^{1/2}\left(1+\left|m-n\right|\right),\label{eq:3.12}
\end{equation}
 we have
\begin{equation}
a_{m,n}=\int_{-\infty}^{\infty}e^{i(\mu_{m}-\mu_{n})x}\psi(x\vert\beta)dx=q^{\beta(\mu_{m}-\mu_{n})^{2}}\label{eq:3.13}
\end{equation}
and
\begin{equation}
C_{1}(\beta)=\sup_{m\in\mathbb{N}}\sum_{n=1}^{\infty}q^{\beta(\mu_{m}-\mu_{n})^{2}}<\infty.\label{eq:3.14}
\end{equation}

\begin{example}
For $\beta=\frac{1}{2},\ \alpha>\sqrt{2},\ \left|z\right|<1$ let
\[
f(x)=\frac{1}{\left(ze^{ix};q\right)_{\infty}}
\]
and a sequence of positive numbers $\left\{ \mu_{n}\right\} $
\[
\mu_{m}-\mu_{n}\ge\frac{\alpha}{\sqrt{\log q^{-1}}}\log^{1/2}\left(1+\left|m-n\right|\right).
\]
Then by \cite[pp21, (5.12)]{IsmailZhang1},
\[
f_{n}=\int_{-\infty}^{\infty}f(x)e^{-i\mu_{n}x}d\psi\left(x\vert\frac{1}{2}\right)=q^{\mu_{n}^{2}/2}\left(-zq^{1/2-\mu_{n}};q\right)_{\infty}.
\]
By the $q$-Binomial theorem,
\[
\begin{aligned} & \int_{-\infty}^{\infty}\left|f(x)\right|^{2}\psi\left(x\vert\frac{1}{2}\right)=\frac{1}{\sqrt{2\pi\log q^{-1}}}\int_{-\infty}^{\infty}\frac{\exp\left(\frac{x^{2}}{\log q^{2}}\right)dx}{\left(ze^{ix},\overline{z}e^{-ix};q\right)_{\infty}}\\
 & =\sum_{n=0}^{\infty}\frac{z^{n}}{\sqrt{2\pi\log q^{-1}}(q;q)_{n}}\int_{-\infty}^{\infty}\frac{\exp\left(\frac{x^{2}}{\log q^{2}}+inx\right)dx}{\left(\overline{z}e^{-ix};q\right)_{\infty}}\\
 & =\sum_{n=0}^{\infty}\frac{z^{n}q^{n^{2}/2}(-\overline{z}q^{n+1/2};q)_{\infty}}{(q;q)_{n}}=(-\overline{z}q^{1/2};q)_{\infty}\sum_{n=0}^{\infty}\frac{z^{n}q^{n^{2}/2}}{(q,-\overline{z}q^{1/2};q)_{n}}\\
 & =(-zq^{1/2};q)_{\infty}\sum_{n=0}^{\infty}\frac{\overline{z}^{n}q^{n^{2}/2}}{(q,-zq^{1/2};q)_{n}},
\end{aligned}
\]
which also shows that for $\left|z\right|<q^{1/2}$,
\begin{equation}
(\overline{z};q)_{\infty}\sum_{n=0}^{\infty}\frac{(-z)^{n}q^{\binom{n}{2}}}{(q,\overline{z};q)_{n}}=(z;q)_{\infty}\sum_{n=0}^{\infty}\frac{(-\overline{z})^{n}q^{\binom{n}{2}}}{(q,z;q)_{n}}>0.\label{eq:3.15}
\end{equation}
 By the generalized Bessel inequality \eqref{eq:2.15} we get
\begin{equation}
\sum_{n=1}^{\infty}q^{\mu_{n}^{2}}\left|\left(-zq^{1/2-\mu_{n}};q\right)_{\infty}\right|^{2}\le\sup_{m\in\mathbb{N}}\sum_{n=1}^{\infty}q^{(\mu_{m}-\mu_{n})^{2}/2}\sum_{n=0}^{\infty}\frac{(-zq^{1/2};q)_{\infty}\overline{z}^{n}q^{n^{2}/2}}{(q,-zq^{1/2};q)_{n}},\label{eq:3.16}
\end{equation}
where $\left|z\right|<1$.
\end{example}

\begin{example}
Let 
\[
\beta=\frac{1}{2},\ \alpha>\sqrt{2},\ z\in\mathbb{C},\quad f(x)=\left(zq^{1/2}e^{ix};q\right)_{\infty}
\]
and a sequence of positive numbers $\left\{ \mu_{n}\right\} _{n\in\mathbb{N}}$
such that 
\[
\mu_{m}-\mu_{n}\ge\frac{\alpha}{\sqrt{\log q^{-1}}}\log^{1/2}\left(1+\left|m-n\right|\right).
\]
Then by \cite[(5.34)]{IsmailZhang1} 
\[
f_{n}=\int_{-\infty}^{\infty}f(x)e^{-i\mu_{n}x}\psi\left(x\vert\frac{1}{2}\right)dx=q^{\mu_{n}^{2}}A_{q}\left(q^{-\mu_{n}}z\right),
\]
where the Ramanujan function is defined by \cite{Ismail,IsmailZhang1}
\[
A_{q}(z)=\sum_{n=0}^{\infty}\frac{q^{n^{2}}(-z)^{n}}{(q;q)_{n}}.
\]
 By the $q$-Binomial theorem and \cite[(5.34)]{IsmailZhang1} ,
\[
\begin{aligned} & \int_{-\infty}^{\infty}\left|f(x)\right|^{2}\psi\left(x\vert\frac{1}{2}\right)dx=\int_{-\infty}^{\infty}\frac{\left(zq^{1/2}e^{ix},\overline{z}q^{1/2}e^{-ix};q\right)_{\infty}\exp\left(\frac{x^{2}}{\log q^{2}}\right)}{\sqrt{2\pi\log q^{-1}}}dx\\
 & =\sum_{n=0}^{\infty}\frac{(-z)^{n}q^{n^{2}}}{(q;q)_{n}}A_{q}\left(q^{-n}\overline{z}\right)=\sum_{n=0}^{\infty}\frac{(-\overline{z})^{n}q^{n^{2}}}{(q;q)_{n}}A_{q}\left(q^{-n}z\right),
\end{aligned}
\]
which also gives 
\begin{equation}
\sum_{n=0}^{\infty}\frac{(-z)^{n}q^{n^{2}}}{(q;q)_{n}}A_{q}\left(q^{-n}\overline{z}\right)=\sum_{n=0}^{\infty}\frac{(-\overline{z})^{n}q^{n^{2}}}{(q;q)_{n}}A_{q}\left(q^{-n}z\right)>0.\label{eq:3.17}
\end{equation}
Then for $z\in\mathbb{C}$ by \eqref{eq:2.15} we have
\begin{equation}
\begin{aligned} & \sum_{n=1}^{\infty}q^{2\mu_{n}^{2}}\left|A_{q}\left(q^{-\mu_{n}}z\right)\right|^{2}\\
 & \le\sup_{m\in\mathbb{N}}\sum_{n=1}^{\infty}q^{(\mu_{m}-\mu_{n})^{2}/2}\sum_{n=0}^{\infty}\frac{(-z)^{n}q^{n^{2}}}{(q;q)_{n}}A_{q}\left(q^{-n}\overline{z}\right).
\end{aligned}
\label{eq:3.18}
\end{equation}
\end{example}

\begin{example}
Let 
\[
\beta=1,\ \alpha>\sqrt{2},\ \left|z\right|<1,\quad f(x)=\frac{1}{\left(-ze^{ix};q\right)_{\infty}}
\]
 and a sequence of positive numbers $\left\{ \mu_{n}\right\} _{n\in\mathbb{N}}$
such that 
\[
\mu_{m}-\mu_{n}\ge\frac{\alpha}{\sqrt{\log q^{-2}}}\log^{1/2}\left(1+\left|m-n\right|\right).
\]
Then by \cite[(5.36)]{IsmailZhang1},
\[
f_{n}=\int_{-\infty}^{\infty}f(x)e^{-i\mu_{n}x}\psi\left(x\vert1\right)dx=q^{\mu_{n}^{2}}A_{q}\left(q^{-2\mu_{n}}z\right).
\]
 By the $q$-Binomial theorem and \cite[(5.36)]{IsmailZhang1}, 
\[
\begin{aligned} & \int_{-\infty}^{\infty}\left|f(x)\right|^{2}\psi\left(x\vert1\right)dx=\int_{-\infty}^{\infty}\frac{\exp\left(\frac{x^{2}}{\log q^{4}}\right)dx}{\left(-ze^{ix},-\overline{z}e^{-ix};q\right)_{\infty}\sqrt{\pi\log q^{-4}}}\\
 & =\sum_{n=0}^{\infty}\frac{(-z)^{n}}{(q;q)_{n}}\int_{-\infty}^{\infty}\frac{\exp\left(\frac{x^{2}}{\log q^{4}}+inx\right)dx}{\left(-\overline{z}e^{-ix};q\right)_{\infty}\sqrt{\pi\log q^{-4}}}=\sum_{n=0}^{\infty}\frac{(-z)^{n}}{(q;q)_{n}}q^{n^{2}}A_{q}\left(q^{-2n}\overline{z}\right),
\end{aligned}
\]
which also gives
\begin{equation}
\sum_{n=0}^{\infty}\frac{(-z)^{n}}{(q;q)_{n}}q^{n^{2}}A_{q}\left(q^{-2n}\overline{z}\right)=\sum_{n=0}^{\infty}\frac{(-\overline{z})^{n}}{(q;q)_{n}}q^{n^{2}}A_{q}\left(q^{-2n}z\right)>0,\label{eq:3.19}
\end{equation}
 where $\left|z\right|<1$. 

Then by \eqref{eq:2.15},
\begin{equation}
\begin{aligned} & \sum_{n=1}^{\infty}q^{2\mu_{n}^{2}}\left|A_{q}\left(q^{-2\mu_{n}}z\right)\right|^{2}\\
 & \le\sup_{m\in\mathbb{N}}\sum_{n=1}^{\infty}q^{(\mu_{m}-\mu_{n})^{2}}\sum_{n=0}^{\infty}\frac{(-z)^{n}}{(q;q)_{n}}q^{n^{2}}A_{q}\left(q^{-2n}\overline{z}\right).
\end{aligned}
\label{eq:3.20}
\end{equation}
\end{example}

\begin{example}
For $x=\cos\theta$, $\theta\in[0,\pi]$ and $\left|t\right|<1$ let
\[
\beta=\frac{1}{4},\ \alpha>\sqrt{2},\quad f(y)=\frac{1}{\left(te^{i\left(y+\theta\right)},te^{i\left(y-\theta\right)};q\right)_{\infty}}
\]
 and a sequence of positive numbers $\left\{ \mu_{n}\right\} _{n\in\mathbb{N}}$
such that 
\[
\mu_{m}-\mu_{n}\ge\frac{\alpha}{\sqrt{\log q^{-1/2}}}\log^{1/2}\left(1+\left|m-n\right|\right).
\]
 Then by \cite[(5.24)]{IsmailZhang1},
\[
\begin{aligned} & f_{n}=\int_{-\infty}^{\infty}f(x)e^{-i\mu_{n}x}\psi\left(x\vert\frac{1}{4}\right)dx=\int_{-\infty}^{\infty}\frac{\exp\left(\frac{y^{2}}{\log q}-iy\mu_{n}\right)dy}{\left(te^{i\left(y+\theta\right)},te^{i\left(y-\theta\right)};q\right)_{\infty}\sqrt{\pi\log q^{-1}}}\\
 & =q^{\mu_{n}^{2}/4}\left(t^{2}q^{1-\mu_{n}};q^{2}\right)_{\infty}\mathcal{E}_{q}\left(x;tq^{-\mu_{n}/2}\right).
\end{aligned}
\]
By \cite[(2.14)]{IsmailZhang1} and \cite[(5.24)]{IsmailZhang1}
\[
\begin{aligned} & \int_{-\infty}^{\infty}\left|f(x)\right|^{2}\psi\left(x\vert\frac{1}{4}\right)dx=\int_{-\infty}^{\infty}\frac{\exp\left(\frac{y^{2}}{\log q}\right)\left(\pi\log q^{-1}\right)^{-\frac{1}{2}}dy}{\left(te^{i\left(y+\theta\right)},te^{i\left(y-\theta\right)},\overline{t}e^{-i\left(y+\theta\right)},\overline{t}e^{-i\left(y-\theta\right)};q\right)_{\infty}}\\
 & =\sum_{n=0}^{\infty}\frac{H_{n}(\cos\theta\vert q)\overline{t}^{n}}{(q;q)_{n}}\int_{-\infty}^{\infty}\frac{\exp\left(\frac{y^{2}}{\log q}-iny\right)\left(\pi\log q^{-1}\right)^{-\frac{1}{2}}dy}{\left(te^{i\left(y+\theta\right)},te^{i\left(y-\theta\right)};q\right)_{\infty}}\\
 & =\sum_{n=0}^{\infty}\frac{H_{n}(x\vert q)\overline{t}^{n}}{(q;q)_{n}}q^{n^{2}/4}\left(t^{2}q^{1-n};q^{2}\right)_{\infty}\mathcal{E}_{q}\left(x;tq^{-n/2}\right)\\
 & =\sum_{n=0}^{\infty}\frac{H_{n}(x\vert q)t^{n}}{(q;q)_{n}}q^{n^{2}/4}\left(\overline{t}^{2}q^{1-n};q^{2}\right)_{\infty}\mathcal{E}_{q}\left(x;\overline{t}q^{-n/2}\right),
\end{aligned}
\]
where $H_{n}(x\vert q)$ is the $n$th $q$-Hermite polynomial and
$\mathcal{E}_{q}\left(x;t\right)$ is a $q$-analogue of the plane
wave function, \cite{Ismail,IsmailZhang1}. 

Then,
\begin{equation}
\sum_{n=0}^{\infty}\frac{H_{n}(x\vert q)\overline{t}^{n}}{(q;q)_{n}}q^{n^{2}/4}\left(t^{2}q^{1-n};q^{2}\right)_{\infty}\mathcal{E}_{q}\left(x;tq^{-n/2}\right)\ge0.\label{eq:3.21}
\end{equation}
By the generalized Bessel inequality,
\begin{equation}
\begin{aligned} & \sum_{n=1}^{\infty}q^{\mu_{n}^{2}/2}\left|\left(t^{2}q^{1-\mu_{n}};q^{2}\right)_{\infty}\mathcal{E}_{q}\left(x;tq^{-\mu_{n}/2}\right)\right|^{2}\\
 & \le\left(\sup_{m\in\mathbb{N}}\sum_{n=1}^{\infty}q^{(\mu_{m}-\mu_{n})^{2}/4}\right)\sum_{n=0}^{\infty}\frac{H_{n}(x\vert q)t^{n}}{(q;q)_{n}}q^{n^{2}/4}\left(\overline{t}^{2}q^{1-n};q^{2}\right)_{\infty}\mathcal{E}_{q}\left(x;\overline{t}q^{-n/2}\right).
\end{aligned}
\label{eq:3.22}
\end{equation}
 
\end{example}

\begin{example}
For $\Re(\nu)>-\frac{1}{2},\ \alpha>\sqrt{2},\ z\in\mathbb{C}\backslash\left\{ 0\right\} $
let 
\[
\beta=\frac{1}{2},\quad f(x)=\frac{\left(\frac{q^{\nu+1/2}z^{2}e^{ix}}{4};q\right)_{\infty}}{\left(q,-q^{\nu+1/2}e^{ix};q\right)_{\infty}}
\]
 and any sequence of positive numbers $\left\{ \mu_{n}\right\} _{n\in\mathbb{N}}$
such that 
\[
\mu_{m}-\mu_{n}\ge\frac{\alpha}{\sqrt{\log q^{-1}}}\log^{1/2}\left(1+\left|m-n\right|\right).
\]
Then by \cite[(5.68)]{IsmailZhang1}
\[
\begin{aligned} & f_{n}=\int_{-\infty}^{\infty}f(x)e^{-i\mu_{n}x}\psi\left(x\vert\frac{1}{2}\right)dx=\int_{-\infty}^{\infty}\frac{\left(\frac{q^{\nu+1/2}z^{2}e^{ix}}{4};q\right)_{\infty}\exp\left(\frac{x^{2}}{\log q^{2}}-i\mu_{n}x\right)}{\left(q,-q^{\nu+1/2}e^{ix};q\right)_{\infty}\sqrt{2\pi\log q^{-1}}}dx\\
 & =q^{\mu_{n}^{2}/2}J_{\nu-\mu_{n}}^{(2)}\left(z;q\right)\left(\frac{z}{2}\right)^{\mu_{n}-\nu},
\end{aligned}
\]
where $J_{\nu}^{(2)}\left(z;q\right)$ is $q$-Bessel function of
the second kind, \cite{Andrews,DLMF,Ismail}.

By the $q$-binomial theorem \cite{Andrews,DLMF,Ismail}
\[
\begin{aligned} & \int_{-\infty}^{\infty}\left|f(x)\right|^{2}\psi\left(x\vert\frac{1}{2}\right)dx=\int_{-\infty}^{\infty}\frac{\left(\frac{q^{\overline{\nu}+1/2}\overline{z}^{2}e^{-ix}}{4};q\right)_{\infty}}{\left(q,-q^{\overline{\nu}+1/2}e^{-ix};q\right)_{\infty}}\frac{\left(\frac{q^{\nu+1/2}z^{2}e^{ix}}{4};q\right)_{\infty}\exp\left(\frac{x^{2}}{\log q^{2}}\right)}{\left(q,-q^{\nu+1/2}e^{ix};q\right)_{\infty}\sqrt{2\pi\log q^{-1}}}dx\\
 & =\frac{1}{(q;q)_{\infty}}\sum_{n=0}^{\infty}\frac{\left(-\frac{\overline{z}^{2}}{4};q\right)_{n}}{(q;q)_{n}}\left(-q^{\overline{\nu}+1/2}\right)^{n}\int_{-\infty}^{\infty}\frac{\left(\frac{q^{\nu+1/2}z^{2}e^{ix}}{4};q\right)_{\infty}\exp\left(\frac{x^{2}}{\log q^{2}}-inx\right)}{\left(q,-q^{\nu+1/2}e^{ix};q\right)_{\infty}\sqrt{2\pi\log q^{-1}}}dx\\
 & =\frac{1}{(q;q)_{\infty}}\sum_{n=0}^{\infty}\frac{\left(-\frac{\overline{z}^{2}}{4};q\right)_{n}}{(q;q)_{n}}\left(-q^{\overline{\nu}+1/2}\right)^{n}q^{n^{2}/2}J_{\nu-n}^{(2)}\left(z;q\right)\left(\frac{z}{2}\right)^{n-\nu}\\
 & =\frac{1}{(q;q)_{\infty}}\sum_{n=0}^{\infty}\frac{\left(-\frac{z^{2}}{4};q\right)_{n}}{(q;q)_{n}}\left(-q^{\nu+1/2}\right)^{n}q^{n^{2}/2}J_{\overline{\nu}-n}^{(2)}\left(\overline{z};q\right)\left(\frac{z}{2}\right)^{n-\overline{\nu}}.
\end{aligned}
\]
Thus,
\begin{equation}
\begin{aligned} & \sum_{n=0}^{\infty}\frac{\left(-\frac{\overline{z}^{2}}{4};q\right)_{n}}{(q;q)_{n}}\left(-q^{\overline{\nu}+1/2}\right)^{n}q^{n^{2}/2}J_{\nu-n}^{(2)}\left(z;q\right)\left(\frac{z}{2}\right)^{n-\nu}\\
 & =\sum_{n=0}^{\infty}\frac{\left(-\frac{z^{2}}{4};q\right)_{n}}{(q;q)_{n}}\left(-q^{\nu+1/2}\right)^{n}q^{n^{2}/2}J_{\overline{\nu}-n}^{(2)}\left(\overline{z};q\right)\left(\frac{z}{2}\right)^{n-\overline{\nu}}\ge0
\end{aligned}
\label{eq:3.23}
\end{equation}
and
\begin{equation}
\begin{aligned} & \sum_{n=1}^{\infty}q^{\mu_{n}^{2}}\left|J_{\nu-\mu_{n}}^{(2)}\left(z;q\right)\left(\frac{z}{2}\right)^{\mu_{n}-\nu}\right|^{2}\\
 & \le\frac{\sup_{m\in\mathbb{N}}\sum_{n=1}^{\infty}q^{(\mu_{m}-\mu_{n})^{2}/2}}{(q;q)_{\infty}}\sum_{n=0}^{\infty}\frac{\left(-\frac{z^{2}}{4};q\right)_{n}}{(q;q)_{n}}\left(-q^{\nu+1/2}\right)^{n}q^{n^{2}/2}J_{\overline{\nu}-n}^{(2)}\left(\overline{z};q\right)\left(\frac{z}{2}\right)^{n-\overline{\nu}}.
\end{aligned}
\label{eq:3.24}
\end{equation}
 
\end{example}

\subsection{Gamma distribution}

For $\sigma>0$ let 

\begin{equation}
\phi_{n}(x)=x^{i\mu_{n}},\quad\omega_{1}(x)=\frac{e^{-x}x^{\sigma-1}1_{(0,\infty)}(x)}{\Gamma(\sigma)},\label{eq:3.25}
\end{equation}
where $\left\{ \mu_{n}\right\} _{n=1}^{\infty}$ is a sequence of
nondecreasing positive numbers such that for $m\ge n$

\begin{equation}
\mu_{m}-\mu_{n}\ge c_{1}\log\left(1+\left|m-n\right|\right),\label{eq:3.26}
\end{equation}
and $c_{1}$ is a positive number satisfying $\frac{c_{1}\pi}{2}>1$.

Then
\begin{equation}
a_{m,n}=\int_{0}^{\infty}x^{i(\mu_{m}-\mu_{n})}d\omega_{1}(x)=\frac{\Gamma\left(\sigma+i(\mu_{m}-\mu_{n})\right)}{\Gamma(\sigma)}.\label{eq:3.27}
\end{equation}
Since $\frac{c_{1}\pi}{2}>1$, then there exists a positive number
$\epsilon$ such that $c_{1}\left(\frac{\pi}{2}-\epsilon\right)>1$,
by \cite[(5.11.9)]{DLMF}, 
\begin{equation}
\left|\Gamma(x+iy)\right|=\mathcal{O}\left(e^{-(\pi/2-\epsilon)\left|y\right|}\right)\label{eq:3.28}
\end{equation}
as $y\to\pm\infty$, uniformly for $x$ on any compact subset of $\mathbb{R}$.
Hence,
\begin{equation}
\begin{aligned} & \sum_{n=1}^{\infty}\left|\frac{\Gamma(\sigma+i(\mu_{m}-\mu_{n})}{\Gamma(\sigma)}\right|=\mathcal{O}\left(\sum_{n=1}^{\infty}e^{-(\pi/2-\epsilon)\left|\mu_{m}-\mu_{n}\right|}\right)\\
 & =\mathcal{O}\left(\sum_{n=1}^{\infty}e^{-(\pi/2-\epsilon)c_{1}\log\left(1+\left|m-n\right|\right)}\right)=\mathcal{O}\left(\sum_{n=-\infty}^{\infty}\frac{1}{\left(1+\left|n\right|\right)^{(\pi/2-\epsilon)c_{1}}}\right),
\end{aligned}
\label{eq:3.29}
\end{equation}
which shows
\begin{equation}
C_{1}=\frac{1}{\left|\Gamma(\sigma)\right|}\sup_{m\in\mathbb{N}}\sum_{n=1}^{\infty}\left|\Gamma\left(\sigma+i(\mu_{m}-\mu_{n})\right)\right|<\infty.\label{eq:3.30}
\end{equation}
Then, the infinite matrix
\begin{equation}
A=\left(\frac{\Gamma\left(\sigma+i(\mu_{m}-\mu_{n})\right)}{\Gamma(\sigma)}\right)_{m,n=1}^{\infty}\label{eq:3.31}
\end{equation}
defines a positive semidefinite operators on $\ell^{2}(\mathbb{N})$
with its operator norm less than $C_{1}$. 
\begin{example}
Let $f(x)=\log x$, then by \cite[pp40,  (4.44)]{Oberhettinger2}
\[
f_{n}=\int_{0}^{\infty}x^{-i\mu_{n}}\log x\omega_{1}(x)dx=\frac{\Gamma(\sigma-i\mu_{n})\psi(\sigma-i\mu_{n})}{\Gamma(\sigma)},
\]
and by \cite[pp39, (4.43)]{Oberhettinger2}
\[
\int_{0}^{\infty}\log^{2}x\omega_{1}(x)dx=\left(\psi^{2}(\sigma)+\psi'(\sigma)\right),
\]
where
\[
\psi(z)=\frac{d}{dz}\log\Gamma(z)=\frac{\Gamma'(z)}{\Gamma(z)}.
\]
 By \eqref{eq:2.15} we have
\begin{equation}
\begin{aligned} & \sum_{n=1}^{\infty}\left|\Gamma(\sigma+i\mu_{n})\psi(\sigma+i\mu_{n})\right|^{2}\\
 & \le\Gamma(\sigma)\left(\psi^{2}(\sigma)+\psi'(\sigma)\right)\cdot\sup_{m\in\mathbb{N}}\sum_{n=1}^{\infty}\left|\Gamma\left(\sigma+i(\mu_{m}-\mu_{n})\right)\right|.
\end{aligned}
\label{eq:3.32}
\end{equation}
 
\end{example}

\begin{example}
For $\ell\ge0$ let $f(x)=L_{\ell}^{(\sigma-1)}(x)$ then by \cite[pp89,  (9.31)]{Oberhettinger2},
\[
\begin{aligned} & f_{n}=\int_{0}^{\infty}L_{\ell}^{(\sigma-1)}(x)x^{-i\mu_{n}}\omega_{1}(x)dx\\
 & =\frac{\Gamma(\sigma-i\mu_{n})\Gamma(\ell+i\mu_{n})}{\ell!\Gamma(\sigma)\Gamma(i\mu_{n})},
\end{aligned}
\]
and by the orthogonality of Laguerre polynomials, \cite{Andrews,DLMF,Ismail}
\[
\int_{0}^{\infty}\left(L_{\ell}^{(\sigma-1)}(x)\right)^{2}\omega_{1}(x)dx=\frac{\Gamma(\ell+\sigma)}{\ell!\Gamma(\sigma)}.
\]
 Then,
\begin{equation}
\begin{aligned} & \sum_{n=1}^{\infty}\left|\frac{\Gamma(\sigma+i\mu_{n})\Gamma(\ell+i\mu_{n})}{\Gamma(i\mu_{n})}\right|^{2}\\
 & \le\frac{\Gamma(\ell+\sigma)}{\ell!}\sup_{m\in\mathbb{N}}\sum_{n=1}^{\infty}\left|\Gamma\left(\sigma+i(\mu_{m}-\mu_{n})\right)\right|.
\end{aligned}
\label{eq:3.33}
\end{equation}
 
\end{example}

\begin{example}
Let $f(x)=\frac{e^{x}}{e^{x}+1}$ then by \cite[(25.5.3)]{DLMF},
\[
f_{n}=\int_{0}^{\infty}\frac{x^{-i\mu_{n}}\omega_{1}(x)dx}{e^{-x}+1}=\frac{\left(1-2^{1-\sigma+i\mu_{n}}\right)\Gamma(\sigma-i\mu_{n})\zeta(\sigma-i\mu_{n})}{\Gamma(\sigma)}
\]
and by \cite[(25.5.4)]{DLMF},
\[
\int_{0}^{\infty}\frac{\omega_{1}(x)dx}{(e^{-x}+1)^{2}}=\frac{\left(1-2^{1-\sigma}\right)\Gamma(\sigma+1)\zeta(\sigma)}{\Gamma(\sigma)}.
\]
 Then
\begin{equation}
\begin{aligned} & \sum_{n=1}^{\infty}\left|\left(1-2^{1-\sigma+i\mu_{n}}\right)\Gamma(\sigma+i\mu_{n})\zeta(\sigma+i\mu_{n})\right|^{2}\\
 & =2^{\sigma-1}\Gamma(\sigma+1)\left(1-2^{1-\sigma}\right)\zeta(\sigma)\sup_{m\in\mathbb{N}}\sum_{n=1}^{\infty}\left|\Gamma\left(\sigma+i(\mu_{m}-\mu_{n})\right)\right|.
\end{aligned}
\label{eq:3.34}
\end{equation}
\end{example}

\begin{example}
Let $\nu\ge0$ and $f(x)=e^{x/2}K_{\nu}(x/2)$, then by \cite[pp115, (11.5)]{Oberhettinger2},
\[
\begin{aligned} & f_{n}=\int_{0}^{\infty}K_{\nu}(x/2)x^{-i\mu_{n}}\omega_{1}(x)dx=\\
 & =\frac{\sqrt{\pi}\Gamma(\sigma-i\mu_{n}-\nu)\Gamma(\sigma-i\mu_{n}+\nu)}{\Gamma(\sigma)\Gamma(\sigma-i\mu_{n}+1/2)}
\end{aligned}
\]
 and by \cite[pp123, (11.45)]{Oberhettinger2},
\[
\begin{aligned} & \int_{0}^{\infty}\left|f(x)\right|^{2}\omega_{1}(x)dx=\frac{1}{\Gamma(\sigma)}\int_{0}^{\infty}x^{\sigma-1}K_{\nu}^{2}(x/2)dx\\
 & =\frac{\sqrt{\pi}\Gamma\left(\sigma/2+\nu\right)\Gamma\left(\sigma/2-\nu\right)\Gamma(\sigma/2)}{2^{2-\sigma}\Gamma\left((\sigma+1)/2\right)\Gamma(\sigma)}.
\end{aligned}
\]
 Hence, for $\sigma,\nu>0$, 
\begin{equation}
\begin{aligned} & \sum_{n=1}^{\infty}\left|\frac{\Gamma(\sigma+i\mu_{n}-\nu)\Gamma(\sigma+i\mu_{n}+\nu)}{\Gamma(\sigma+i\mu_{n}+1/2)}\right|^{2}\\
 & \le\frac{\Gamma\left(\sigma/2+\nu\right)\Gamma\left(\sigma/2-\nu\right)\Gamma(\sigma/2)}{2^{2-\sigma}\sqrt{\pi}\Gamma\left((\sigma+1)/2\right)}\sup_{m\in\mathbb{N}}\sum_{n=1}^{\infty}\left|\Gamma\left(\sigma+i(\mu_{m}-\mu_{n})\right)\right|.
\end{aligned}
\label{eq:3.35}
\end{equation}
\end{example}

\begin{example}
Let $f(x)=e^{-a^{2}/(4x)},\ a>0$, then by \cite[(10.32.10)]{DLMF}

\[
f_{n}=\int_{0}^{\infty}f(x)x^{-i\mu_{n}}\omega_{1}(x)dx=\int_{0}^{\infty}\frac{e^{-x-a^{2}/(4x)}dx}{\Gamma(\sigma)x^{-\sigma+i\mu_{n}+1}}=\frac{a^{-\sigma+i\mu_{n}}K_{\sigma-i\mu_{n}}(a)}{2^{\sigma-i\mu_{n}-1}\Gamma(\sigma)}
\]
and
\[
\int_{0}^{\infty}\left|f(x)\right|^{2}\omega_{1}(x)dx=\int_{0}^{\infty}\frac{e^{-x-a^{2}/(2x)}dx}{x^{1-\sigma}\Gamma(\sigma)}=\frac{2^{\sigma/2+1}a^{\sigma}K_{\sigma}(\sqrt{2}a)}{\Gamma(\sigma)},
\]
where $K_{\nu}(z)$ is the modified Bessel function, \cite{Andrews,DLMF,Ismail,Rademacher}. 

Then,
\begin{equation}
\sum_{n=1}^{\infty}\left|K_{\sigma+i\mu_{n}}(a)\right|^{2}\le a^{3\sigma}2^{5\sigma/2-1}K_{\sigma}(\sqrt{2}a)\sup_{m\in\mathbb{N}}\sum_{n=1}^{\infty}\left|\Gamma\left(\sigma+i(\mu_{m}-\mu_{n})\right)\right|.\label{eq:3.36}
\end{equation}
\end{example}

\begin{example}
For $a,\nu>0$ let $f(x)=J_{\nu}(a\sqrt{x})$. Then by \cite[pp138,  (14.28)]{Oberhettinger3}
we get
\[
\begin{aligned} & f_{n}=\int_{0}^{\infty}f(x)x^{-i\mu_{n}}\omega_{1}(x)dx=\frac{1}{\Gamma(\sigma)}\int_{0}^{\infty}J_{\nu}(a\sqrt{x})\exp\left(-x\right)t^{\sigma-i\mu_{n}-1}dt\\
 & =\frac{(a/2)^{\nu}\Gamma\left(\sigma-i\mu_{n}+\frac{\nu}{2}\right)}{\Gamma(\nu+1)\Gamma(\sigma)}{}_{1}F_{1}\left(\sigma-i\mu_{n}+\frac{\nu}{2};\nu+1;-a^{2}\right),
\end{aligned}
\]
and by \cite[pp140,  14.35]{Oberhettinger3} 
\[
\begin{aligned} & \int_{0}^{\infty}f^{2}(x)\omega_{1}(x)dx=\frac{1}{\Gamma(\sigma)}\int_{0}^{\infty}x^{\sigma-1}\exp\left(-x\right)J_{\nu}^{2}(a\sqrt{x})dx\\
 & =\frac{a^{2\nu}\Gamma\left(\sigma+\nu\right)}{2^{2\nu}\Gamma(\sigma)\Gamma^{2}(\nu+1)}{}_{2}F_{2}\left(\nu+\frac{1}{2},\sigma+\nu;\nu+1,2\nu+1;-a^{2}\right).
\end{aligned}
\]
 Then by \eqref{eq:2.15} we get
\begin{equation}
\begin{aligned} & \sum_{n=1}^{\infty}\left|\Gamma\left(\sigma-i\mu_{n}+\frac{\nu}{2}\right){}_{1}F_{1}\left(\sigma-i\mu_{n}+\frac{\nu}{2};\nu+1;-a^{2}\right)\right|^{2}\\
 & \le{}_{2}F_{2}\left(\nu+\frac{1}{2},\sigma+\nu;\nu+1,2\nu+1;-a^{2}\right)\\
 & \times\Gamma\left(\sigma+\nu\right)\sup_{m\in\mathbb{N}}\sum_{n=1}^{\infty}\left|\Gamma\left(\sigma+i(\mu_{m}-\mu_{n})\right)\right|.
\end{aligned}
\label{eq:3.37}
\end{equation}
\end{example}

\begin{example}
Let $f(t)=_{1}F_{1}\left(a;b;xt\right),\ x\in(-1,1)$, then by \cite[pp115,  exercise 11]{Andrews}
we have 
\[
\begin{aligned} & f_{n}=\int_{0}^{\infty}f(t)t^{-i\mu_{n}}\omega_{1}(t)dt=\frac{1}{\Gamma(\sigma)}\int_{0}^{\infty}e^{-t}t^{\sigma-i\mu_{n}-1}{}_{1}F_{1}\left(a;\sigma;xt\right)dt\\
 & =_{2}F_{1}(a,\sigma-i\mu_{n};\sigma;x)
\end{aligned}
\]
and by \cite[pp235, exercise 6]{Andrews} 
\[
\begin{aligned} & \int_{0}^{\infty}f^{2}(t)\omega_{1}(t)dt=\frac{1}{\Gamma(\sigma)}\int_{0}^{\infty}e^{-t}t^{\sigma-1}{}_{1}F_{1}\left(a;\sigma;tx\right){}_{1}F_{1}\left(a;\sigma;tx\right)dt\\
 & =\frac{x^{2\sigma}}{(1-x)^{2a}}{}_{2}F_{1}\left(a,a;\sigma;\frac{x^{2}}{(1-x)^{2}}\right).
\end{aligned}
\]
 Then 
\begin{equation}
\begin{aligned} & \sum_{n=1}^{\infty}\left|_{2}F_{1}(a,\sigma-i\mu_{n};\sigma;x)\right|^{2}\\
 & \le x^{2\sigma}{}_{2}F_{1}\left(a,a;\sigma;\frac{x^{2}}{(1-x)^{2}}\right)\frac{\sup_{m\in\mathbb{N}}\sum_{n=1}^{\infty}\left|\Gamma\left(\sigma+i(\mu_{m}-\mu_{n})\right)\right|}{\Gamma(\sigma)(1-x)^{2a}}.
\end{aligned}
\label{eq:3.38}
\end{equation}
\end{example}

\subsection{Beta distribution}

For $p,q>0$ let 
\begin{equation}
\omega_{2}(x)=\frac{x^{p-1}(1-x)^{q-1}1_{(0,1)}(x)}{B(p,q)},\label{eq:3.39}
\end{equation}
where $B(p,q)$ is the Euler Beta function, \cite[(10.32.9)]{DLMF}.
Let $\phi_{n}(x)=x^{i\lambda_{n}}$ and 
\begin{equation}
a_{m,n}=\int_{0}^{1}\frac{x^{p+i(\lambda_{m}-\lambda_{n})-1}(1-x)^{q-1}dx}{B(p,q)}=\frac{B\left(p+i(\lambda_{m}-\lambda_{n}),q\right)}{B(p,q)},\label{eq:3.40}
\end{equation}
where $\left\{ \lambda_{n}\right\} _{n=1}^{\infty}$ is a sequence
of positive numbers. If there exist two positive numbers $\alpha,\beta>0$
with $\beta q>1$ such that for $m\ge n\ge1$,
\begin{equation}
\lambda_{m}-\lambda_{n}\ge\alpha\left(1+\left|m-n\right|\right)^{\beta}.\label{eq:3.41}
\end{equation}
Then by \cite[(5.11.12)]{DLMF}
\[
\begin{aligned} & \frac{B\left(p+i(\lambda_{m}-\lambda_{n}),q\right)}{B(p,q)}=\frac{\Gamma(p+i(\lambda_{m}-\lambda_{n}))}{\Gamma(p)}\frac{\Gamma(p+q)}{\Gamma(p+q+i(\lambda_{m}-\lambda_{n}))}\\
 & =\mathcal{O}\left(\frac{1}{(\lambda_{m}-\lambda_{n})^{q}}\right)=\mathcal{O}\left(\frac{1}{(1+\left|m-n\right|)^{\beta q}}\right),
\end{aligned}
\]
 and 
\[
\sum_{n=1}^{\infty}\left|B\left(p+i(\lambda_{m}-\lambda_{n}),q\right)\right|=\mathcal{O}\left(\sum_{n=1}^{\infty}\frac{1}{(1+\left|m-n\right|)^{\beta q}}\right)=\mathcal{O}\left(\sum_{n=1}^{\infty}\frac{1}{(1+\left|n\right|)^{\beta q}}\right).
\]
Hence,
\begin{equation}
C_{2}=\frac{1}{B(p,q)}\sup_{m\in\mathbb{N}}\sum_{n=1}^{\infty}\left|B\left(p+i(\lambda_{m}-\lambda_{n}),q\right)\right|<\infty.\label{eq:3.42}
\end{equation}
Then the infinite matrix 
\begin{equation}
A=\left(\frac{B\left(p+i(\lambda_{m}-\lambda_{n}),q\right)}{B(p,q)}\right)_{m,n=1}^{\infty}\label{eq:3.43}
\end{equation}
defines a bounded positive semidefinite operator on $\ell^{2}\left(\mathbb{N}\right)$
with operator norm $\left\Vert A\right\Vert _{2}\le C_{2}$. 
\begin{example}
Let $f(x)=\log x$, then by \cite[pp39,  (4.41)]{Oberhettinger2},
\[
\begin{aligned} & f_{n}=\int_{0}^{1}f(x)x^{-i\lambda_{n}}\omega_{2}(x)dx=\int_{0}^{1}\frac{x^{p-i\lambda_{n}-1}(1-x)^{q-1}\log xdx}{B(p,q)}\\
 & =\frac{B(p-i\lambda_{n},q)}{B(p,q)}\left(\psi(p-i\lambda_{n})-\psi(p+q-i\lambda_{n})\right)
\end{aligned}
\]
 and by \cite[pp39,  (4.42)]{Oberhettinger2},
\[
\begin{aligned} & \int_{0}^{1}f^{2}(x)\omega_{2}(x)dx=\int_{0}^{1}\frac{x^{p-1}(1-x)^{q-1}\log^{2}xdx}{B(p,q)}\\
 & =(\psi(p)-\psi(q+p))^{2}+\psi'(p)-\psi'(p+q).
\end{aligned}
\]
 By the generalized Bessel inequality \eqref{eq:2.1}, 
\begin{equation}
\begin{aligned} & \sum_{n=1}^{\infty}\left|B(p+i\lambda_{n},q)\left(\psi(p+i\lambda_{n})-\psi(p+q+i\lambda_{n})\right)\right|^{2}\\
 & \le\left((\psi(p)-\psi(q+p))^{2}+\psi'(p)-\psi'(p+q)\right)\\
 & \times B(p,q)\sup_{m\in\mathbb{N}}\sum_{n=1}^{\infty}\left|B\left(p+i(\lambda_{m}-\lambda_{n}),q\right)\right|.
\end{aligned}
\label{eq:3.44}
\end{equation}
\end{example}

\begin{example}
Let 
\[
f(x)=(1-zx)^{-a},\quad0<z<1,\ a\in\mathbb{C}.
\]
By the Euler's integral representation for the Gauss hypergeometric
function $_{2}F_{1}$, \cite[ pp13,  (1.4.8)]{Ismail}
\[
\begin{aligned} & f_{n}=\int_{0}^{1}f(x)x^{-i\lambda_{n}}\omega_{2}(x)dx=\int_{0}^{1}\frac{x^{p-i\lambda_{n}-1}(1-x)^{q-1}(1-zx)^{-a}dx}{B(p,q)}\\
 & =\frac{B(p-i\lambda_{n},q)}{B(p,q)}{}_{2}F_{1}(a,p-i\lambda_{n};p+q-i\lambda_{n};z)
\end{aligned}
\]
and
\[
\begin{aligned} & \int_{0}^{1}\left|f(x)\right|^{2}\omega_{2}(x)dx=\int_{0}^{1}\frac{x^{p-1}(1-x)^{q-1}(1-zx)^{-2\Re(a)}dx}{B(p,q)}\\
 & =_{2}F_{1}(2\Re(a),p;p+q;z).
\end{aligned}
\]
Then,
\begin{equation}
\begin{aligned} & \sum_{n=1}^{\infty}\left|B(p-i\lambda_{n},q){}_{2}F_{1}(a,p-i\lambda_{n};p+q-i\lambda_{n};z)\right|^{2}\\
 & \le B(p,q){}_{2}F_{1}(2\Re(a),p;p+q;z)\sup_{m\in\mathbb{N}}\sum_{n=1}^{\infty}\left|B\left(p+i(\lambda_{m}-\lambda_{n}),q\right)\right|.
\end{aligned}
\label{eq:3.45}
\end{equation}
\end{example}

\begin{example}
For $\ell\in\mathbb{N}_{0}$ let 
\[
f(x)=P_{\ell}^{(p-1,q-1)}(2x-1),
\]
where $P_{n}^{(\alpha,\beta)}$ is the $n$-th Jacobi polynomial,
\cite{Andrews,DLMF,Ismail}. Then by \cite[pp91, (9.44)]{Oberhettinger2},
\[
\begin{aligned} & f_{n}=\int_{0}^{1}f(x)x^{-i\lambda_{n}}\omega_{2}(x)dx=\int_{0}^{1}\frac{P_{\ell}^{(p-1,q-1)}(2x-1)x^{p-i\lambda_{n}-1}(1-x)^{q-1}dx}{B(p,q)}\\
 & =\frac{\Gamma(\ell+q)B(p-i\lambda_{n},q)}{(-1)^{\ell}\ell!\Gamma(q)B(p,q)}{}_{3}F_{2}\left(-\ell,\ell+p+q-1,p-i\lambda_{n};q,p+q-i\lambda_{n};1\right).
\end{aligned}
\]
By the orthogonality of the Jacobi polynomials we have \cite{Andrews,DLMF,Ismail}
\[
\begin{aligned} & \int_{0}^{1}f^{2}(x)\omega_{2}(x)dx=\int_{0}^{1}\frac{\left(P_{\ell}^{(p-1,q-1)}(2x-1)\right)^{2}x^{p-1}(1-x)^{q-1}dx}{B(p,q)}\\
 & =\frac{1}{2\ell+p+q-1}\frac{\Gamma(\ell+p)\Gamma(\ell+q)}{\ell!\Gamma(\ell+p+q-1)B(p,q)}.
\end{aligned}
\]
Therefore, 
\begin{equation}
\begin{aligned} & \sum_{n=1}^{\infty}\left|B(p-i\lambda_{n},q){}_{3}F_{2}\left(\begin{array}{cc}
\begin{array}{c}
-\ell,\ell+p+q-1,p-i\lambda_{n}\\
q,p+q-i\lambda_{n}
\end{array} & \bigg|1\end{array}\right)\right|^{2}\\
 & \le\frac{\ell!\Gamma(\ell+p)\Gamma^{2}(q)\sup_{m\in\mathbb{N}}\sum_{n=1}^{\infty}\left|B\left(p+i(\lambda_{m}-\lambda_{n}),q\right)\right|}{(2\ell+p+q-1)\Gamma(\ell+p+q-1)\Gamma(\ell+q)}.
\end{aligned}
\label{eq:3.46}
\end{equation}
\end{example}

\begin{example}
Let $p,q,p+\nu>0$ and $f(x)=J_{2\nu}\left(x^{1/2}\right)$. Then
by \cite[pp107, (10.61)]{Oberhettinger2} to get
\[
\begin{aligned} & \int_{0}^{1}f(x)x^{-i\lambda_{n}}\omega_{2}(x)dx=\int_{0}^{1}\frac{x^{p-i\lambda_{n}-1}(1-x)^{q-1}J_{2\nu}\left(x^{1/2}\right)dx}{B(p,q)}\\
 & =\frac{B\left(q,p-i\lambda_{n}+\nu\right)}{4^{\nu}\Gamma(2\nu+1)B(p,q)}{}_{1}F_{2}\left(\begin{array}{cc}
\begin{array}{c}
p-i\lambda_{n}+\nu\\
2\nu+1,p+q-i\lambda_{n}+\nu
\end{array} & \bigg|-\frac{1}{4}\end{array}\right)
\end{aligned}
\]
and by \cite[pp109,  (10.68)]{Oberhettinger2} to get 
\[
\begin{aligned} & \int_{0}^{1}f^{2}(x)\omega_{2}(x)dx=\int_{0}^{1}\frac{x^{p-1}(1-x)^{q-1}J_{2\nu}^{2}(x^{1/2})dx}{B(p,q)}\\
 & =\frac{16^{-\nu}B(p+2\nu,q)}{\Gamma^{2}(1+2\nu)B(p,q)}{}_{2}F_{3}\left(\begin{array}{cc}
\begin{array}{c}
p+2\nu,2\nu+1/2\\
p+q+2\nu,2\nu+1,4\nu+1
\end{array} & \bigg|\end{array}-1\right).
\end{aligned}
\]
Therefore,
\begin{equation}
\begin{aligned} & \sum_{n=1}^{\infty}\left|B\left(q,p+i\lambda_{n}+\nu\right){}_{1}F_{2}\left(\begin{array}{cc}
\begin{array}{c}
p+i\lambda_{n}+\nu\\
2\nu+1,p+q+i\lambda_{n}+\nu
\end{array} & \bigg|-\frac{1}{4}\end{array}\right)\right|^{2}\\
 & \le B(p+2\nu,q)\sup_{m\in\mathbb{N}}\sum_{n=1}^{\infty}\left|B\left(p+i(\lambda_{m}-\lambda_{n}),q\right)\right|\\
 & \times{}_{2}F_{3}\left(\begin{array}{cc}
\begin{array}{c}
p+2\nu,2\nu+1/2\\
p+q+2\nu,2\nu+1,4\nu+1
\end{array} & \bigg|\end{array}-1\right).
\end{aligned}
\label{eq:3.47}
\end{equation}
\end{example}

\section{Applications of Corollary \ref{cor:5}}

\subsection{Case: $\psi(k)=-\frac{\zeta(\sigma)}{\zeta^{\prime}(\sigma)}\frac{\Lambda(k)}{k^{\sigma}}$ }

The von Mangoldt function $\Lambda(k)$ has a generating function,
\cite[(27.2.14)]{DLMF},\cite{Apostol,HardyWright}

\begin{equation}
{\displaystyle \frac{\zeta^{\prime}(s)}{\zeta(s)}=-\sum_{n=1}^{\infty}\frac{\Lambda(n)}{n^{s}},\quad\Re(s)>1,}\label{eq:4.1}
\end{equation}
where $\zeta(s)$ is the Riemann zeta function, \cite[(25.2.1)]{DLMF}.
More generally, for any completely multiplicative function \LyXThinSpace $f(n)$
if 

\begin{equation}
{\displaystyle F(s)=\sum_{n=1}^{\infty}\frac{f(n)}{n^{s}},\quad\Re(s)>\sigma_{0},}\label{eq:4.2}
\end{equation}
then,

\begin{equation}
{\displaystyle \frac{F^{\prime}(s)}{F(s)}=-\sum_{n=1}^{\infty}\frac{f(n)\Lambda(n)}{n^{s}},\quad\Re(s)>\sigma_{0}.}\label{eq:4.3}
\end{equation}
Hence, for a Dirichlet character $\chi$, 
\begin{equation}
L(s,\chi)=\sum_{n=1}^{\infty}\frac{\chi(n)}{n^{s}},\quad-\frac{L'(s,\chi)}{L(s,\chi)}=\sum_{n=1}^{\infty}\frac{\chi(n)\Lambda(n)}{n^{s}},\label{eq:4.4}
\end{equation}
and for the Liouville function $\lambda(n)$, 
\begin{equation}
\frac{\zeta(2s)}{\zeta(s)}=\sum_{n=1}^{\infty}\frac{\lambda(n)}{n^{s}},\quad\frac{\zeta'(s)}{\zeta(s)}-2\frac{\zeta'(2s)}{\zeta(2s)}=\sum_{n=1}^{\infty}\frac{\lambda(n)\Lambda(n)}{n^{s}},\label{eq:4.5}
\end{equation}
where $\Re(s)>1$.

For any $k\in\mathbb{N}$ and $\sigma>1$ let $\psi(k)=-\frac{\zeta(\sigma)}{\zeta^{\prime}(\sigma)}\frac{\Lambda(k)}{k^{\sigma}}>0$,
then by \eqref{eq:4.1}
\begin{equation}
\sum_{k=1}^{\infty}\psi(k)=-\frac{\zeta(\sigma)}{\zeta^{\prime}(\sigma)}\sum_{k=1}^{\infty}\frac{\Lambda(k)}{k^{\sigma}}=1.\label{eq:4.6}
\end{equation}
Now, let $\phi_{n}(x)=\frac{1}{x^{\lambda_{n}}}$ and 
\begin{equation}
a_{m,n}=-\frac{\zeta(\sigma)}{\zeta^{\prime}(\sigma)}\sum_{k=1}^{\infty}\frac{\Lambda(k)}{k^{(\lambda_{m}+\lambda_{n}+\sigma)}}=\frac{\zeta(\sigma)}{\zeta^{\prime}(\sigma)}\frac{\zeta^{\prime}(\sigma+\lambda_{m}+\lambda_{n})}{\zeta(\sigma+\lambda_{m}+\lambda_{n})},\label{eq:4.7}
\end{equation}
where $\left\{ \lambda_{n}\right\} _{n\in\mathbb{N}}$ is a sequence
of positive numbers such that $\lambda_{n}\ge n-1$. Then,
\begin{equation}
\begin{aligned} & \sup_{m\in\mathbb{N}}\sum_{n=1}^{\infty}\left|a_{m,n}\right|\le\left|\frac{\zeta(\sigma)}{\zeta^{\prime}(\sigma)}\right|\cdot\sup_{m\in\mathbb{N}}\sum_{k=2}^{\infty}\sum_{n=1}^{\infty}\frac{\Lambda(k)}{k^{m+n+\sigma-2}}\\
 & \le\left|\frac{\zeta(\sigma)}{\zeta^{\prime}(\sigma)}\right|\cdot\sup_{m\in\mathbb{N}}\sum_{k=2}^{\infty}\frac{\log(k)}{k^{m+\sigma-2}(k-1)}\le\left|\frac{\zeta(\sigma)}{\zeta^{\prime}(\sigma)}\right|\sum_{k=2}^{\infty}\frac{\log(k)}{k^{\sigma-1}(k-1)}<\infty.
\end{aligned}
\label{eq:4.8}
\end{equation}
For $t\in\mathbb{R}$ let $f(x)=a(x)x^{-it}$ where $a(x)$ is a completely
multiplicative function such that
\begin{equation}
A(s)=\sum_{n=1}^{\infty}\frac{a(n)}{n^{s}},\quad-\frac{A'(s)}{A(s)}=\sum_{n=1}^{\infty}\frac{a(n)\Lambda(n)}{n^{s}}.\label{eq:4.9}
\end{equation}
Then,
\begin{equation}
\sum_{k=1}^{\infty}\left|f(k)\right|^{2}\psi(k)=\sum_{k=1}^{\infty}\left|a(k)\right|^{2}\psi(k)\le\sum_{k=1}^{\infty}\psi(k)=1\label{eq:4.10}
\end{equation}
and 
\begin{equation}
\begin{aligned} & f_{n}=\sum_{k=1}^{\infty}\frac{f(k)}{k^{\lambda_{n}}}\psi(k)\\
 & =-\frac{\zeta(\sigma)}{\zeta^{\prime}(\sigma)}\sum_{k=1}^{\infty}\frac{a(k)\Lambda(k)}{k^{\lambda_{n}+s}}=\frac{\zeta(\sigma)}{\zeta^{\prime}(\sigma)}\frac{A'(s+\lambda_{n})}{A(s+\lambda_{n})}.
\end{aligned}
\label{eq:4.11}
\end{equation}
 Then by Corollary \ref{cor:5},
\begin{equation}
\sum_{n=1}^{\infty}\left|\frac{A'(s+\lambda_{n})}{A(s+\lambda_{n})}\right|^{2}\le\sup_{m\in\mathbb{N}}\sum_{n=1}^{\infty}\frac{\zeta^{\prime}(\sigma)\zeta^{\prime}(\sigma+\lambda_{m}+\lambda_{n})}{\zeta(\sigma)\zeta(\sigma+\lambda_{m}+\lambda_{n})}.\label{eq:4.12}
\end{equation}
In particular, for $a(n)=1$,
\begin{equation}
\sum_{n=1}^{\infty}\left|\frac{\zeta'(s+\lambda_{n})}{\zeta(s+\lambda_{n})}\right|^{2}\le\sup_{m\in\mathbb{N}}\sum_{n=1}^{\infty}\frac{\zeta^{\prime}(\sigma)\zeta^{\prime}(\sigma+\lambda_{m}+\lambda_{n})}{\zeta(\sigma)\zeta(\sigma+\lambda_{m}+\lambda_{n})},\label{eq:4.13}
\end{equation}
 for $a(n)=\lambda(n)$,
\begin{equation}
\sum_{n=1}^{\infty}\left|\frac{\zeta'(s+\lambda_{n})}{\zeta(s+\lambda_{n})}-2\frac{\zeta'(2s+2\lambda_{n})}{\zeta(2s+2\lambda_{n})}\right|^{2}\le\sup_{m\in\mathbb{N}}\sum_{n=1}^{\infty}\frac{\zeta^{\prime}(\sigma)\zeta^{\prime}(\sigma+\lambda_{m}+\lambda_{n})}{\zeta(\sigma)\zeta(\sigma+\lambda_{m}+\lambda_{n})},\label{eq:4.14}
\end{equation}
and $a(n)=\chi(n)$,
\begin{equation}
\sum_{n=1}^{\infty}\left|\frac{L'(s+\lambda_{n})}{L(s+\lambda_{n})}\right|^{2}\le\sup_{m\in\mathbb{N}}\sum_{n=1}^{\infty}\frac{\zeta^{\prime}(\sigma)\zeta^{\prime}(\sigma+\lambda_{m}+\lambda_{n})}{\zeta(\sigma)\zeta(\sigma+\lambda_{m}+\lambda_{n})}.\label{eq:4.15}
\end{equation}

\subsection{\textmd{Case: $\psi(k)=\frac{1-\delta_{k,1}}{k^{\sigma}(\zeta(\sigma)-1)}$}}

Let $\phi_{n}(x)$ and $\lambda_{n}>n-1$ be as in the last section,
then 

\begin{equation}
a_{m,n}=\frac{1}{\zeta(\sigma)-1}\sum_{k=2}^{\infty}\frac{1}{k^{\sigma+\lambda_{m}+\lambda_{n}}}=\frac{\zeta(\sigma+\lambda_{m}+\lambda_{n})-1}{\zeta(\sigma)-1}.\label{eq:4.16}
\end{equation}
 Then, 
\begin{equation}
\begin{aligned} & \sup_{m\in\mathbb{N}}\sum_{n=1}^{\infty}\left|a_{m,n}\right|\le\frac{1}{\zeta(\sigma)-1}\sup_{m\in\mathbb{N}}\sum_{k=2}^{\infty}\sum_{n=1}^{\infty}\frac{1}{k^{m+n+\sigma-2}}\\
 & \le\frac{1}{\zeta(\sigma)-1}\sup_{m\in\mathbb{N}}\sum_{k=2}^{\infty}\frac{1}{k^{m+\sigma-2}(k-1)}\le\frac{1}{\zeta(\sigma)-1}\sum_{k=2}^{\infty}\frac{1}{k^{\sigma-1}(k-1)}<\infty.
\end{aligned}
\label{eq:4.17}
\end{equation}
 For any $t\in\mathbb{R}$ let
\begin{equation}
f(x)=\frac{a(x)}{x^{it}},\label{eq:4.18}
\end{equation}
where $a(x)$ is any multiplicative function satisfying $\left|a(k)\right|\le1,\ k\in\mathbb{N}$.
Then for $s=\sigma+it$ we have
\begin{equation}
f_{n}=\sum_{k=1}^{\infty}f(k)\phi_{n}(k)\psi(k)=\frac{1}{\zeta(\sigma)-1}\sum_{k=2}^{\infty}\frac{a(k)}{k^{s+\lambda_{n}}}\label{eq:4.19}
\end{equation}
and
\begin{equation}
\sum_{k=1}^{\infty}\left|f(k)\right|^{2}\psi(k)=\frac{1}{\zeta(\sigma)-1}\sum_{k=2}^{\infty}\frac{\left|a(k)\right|^{2}}{k^{\sigma}}\le1.\label{eq:4.20}
\end{equation}
 Then by Corollary \ref{cor:5} we have
\begin{equation}
\sum_{n=1}^{\infty}\left|\sum_{k=2}^{\infty}\frac{a(k)}{k^{s+\lambda_{n}}}\right|^{2}\le\left(\sup_{m\in\mathbb{N}}\sum_{n=1}^{\infty}\left(\zeta(\sigma+\lambda_{m}+\lambda_{n})-1\right)\right)\sum_{k=2}^{\infty}\frac{\left|a(k)\right|^{2}}{k^{\sigma}}\cdot\label{eq:4.21}
\end{equation}
 Hence, for $a(k)=1,\chi(k),\mu(k),\mu(k)\chi(k),\lambda(k)$ we get
\begin{align}
 & \sum_{n=1}^{\infty}\left|\zeta(s+\lambda_{n})-1\right|^{2}\le\left(\zeta(\sigma)-1\right)\cdot\sup_{m\in\mathbb{N}}\sum_{n=1}^{\infty}\left(\zeta(\sigma+\lambda_{m}+\lambda_{n})-1\right),\label{eq:4.22}\\
 & \sum_{n=1}^{\infty}\left|L(s+\lambda_{n},\chi)-1\right|^{2}\le\left(\zeta(\sigma)-1\right)\cdot\sup_{m\in\mathbb{N}}\sum_{n=1}^{\infty}\left(\zeta(\sigma+\lambda_{m}+\lambda_{n})-1\right),\label{eq:4.23}\\
 & \sum_{n=1}^{\infty}\left|\frac{1}{\zeta(s+\lambda_{n})}-1\right|^{2}\le\left(\frac{\zeta(\sigma)}{\zeta(2\sigma)}-1\right)\cdot\sup_{m\in\mathbb{N}}\sum_{n=1}^{\infty}\left(\zeta(\sigma+\lambda_{m}+\lambda_{n})-1\right),\label{eq:4.24}\\
 & \sum_{n=1}^{\infty}\left|\frac{1}{L(s+\lambda_{n},\chi)}-1\right|^{2}\le\left(\frac{\zeta(\sigma)}{\zeta(2\sigma)}-1\right)\cdot\sup_{m\in\mathbb{N}}\sum_{n=1}^{\infty}\left(\zeta(\sigma+\lambda_{m}+\lambda_{n})-1\right),\label{eq:4.25}\\
 & \sum_{n=1}^{\infty}\left|\frac{\zeta(2s+2\lambda_{n})}{\zeta(s+\lambda_{n})}-1\right|^{2}\le\left(\zeta(\sigma)-1\right)\cdot\sup_{m\in\mathbb{N}}\sum_{n=1}^{\infty}\left(\zeta(\sigma+\lambda_{m}+\lambda_{n})-1\right).\label{eq:4.26}
\end{align}

\end{document}